\DeclareMathOperator{\C}{\mathbb{C}}
\DeclareMathOperator{\Z}{\mathbb{Z}}
\DeclareMathOperator{\N}{\mathbb{N}}
\DeclareMathOperator{\cF}{\mathcal{F}}
\newcommand{\cK}{{\mathcal{K}}}
\DeclareMathOperator{\cS}{\mathcal{S}}
\newcommand{\supp}{\mathrm{supp}}
\newcommand{\sstab}{G_N}
\newcommand{\pstab}{\hat G_N}
\newcommand{\Grem}[1]{G(#1)}
\newcommand{\ba}{\begin{align}}
\newcommand{\ea}{\end{align}}
\newcommand{\bea}{\begin{eqnarray}}
\newcommand{\eea}{\end{eqnarray}}
\newcommand{\be}{\begin{equation}}
\newcommand{\ee}{\end{equation}}
\newcommand{\wt}{\mathrm{wt}}
\newcommand{\ie}{{\it i.e.~}}
\newcommand{\eg}{{\it e.g.~}}
\newcommand{\diag}{{\mathrm{diag}}}
\newcommand{\vac}{|0\rangle}
\newcommand{\idV}{\mathbf{1}_V}
\numberwithin{equation}{subsection}
\title{Limits of Vertex Algebras and Large $N$ Factorization}
\author{Thomas Gem\"unden}
\address{Thomas Gem\"unden, Independent, Bayes House, London, UK}
\email{thomas.gemunden@cantab.net}
\author{ Christoph A.~Keller}
\address{Christoph A. Keller,  Department of Mathematics, University of Arizona, Tucson, AZ 85721-0089, USA}
\email{cakeller@math.arizona.edu}
\theoremstyle{plain}
\newtheorem{thm}{Theorem}[section]
\newtheorem{lem}[thm]{Lemma}
\newtheorem{prop}[thm]{Proposition}
\newtheorem{cor}[thm]{Corollary}
\newtheorem*{theorem-non}{Theorem}
\theoremstyle{definition}
\newtheorem{defn}{Definition}[section]
\theoremstyle{remark}
\begin{document}

\begin{abstract}
We investigate the limit of sequences of vertex algebras. We discuss under what condition the vector space direct limit of such a sequence is again a vertex algebra. We then apply this framework to permutation orbifolds of vertex operator algebras and their large $N$ limit. We establish that for any nested oligomorphic permutation orbifold such a large $N$ limit exists, and we give a necessary and sufficient condition for that limit to factorize. This helps clarify the question of what VOAs are candidates for holographic conformal field theories in physics.
\end{abstract}

\maketitle

\section{Introduction}
In this article, we investigate the limit of sequences of Vertex Operator Algebras (VOA) and Vertex Algebras (VA). We address the question under what circumstances the graded vector space direct limit of such a sequence is again a VA. We find that this is decided by convergence of the structure constants. We then use this mathematical framework to address questions about existence and uniqueness of VA limits, and prove several physics conjectures about limits of permutation orbifold VOAs.

To construct a VA limit, we consider a sequence $(V^N)_{N\in\N}$ of VAs  together with connecting maps $f_{MN}: V^M\to V^N$.  We construct the vector space of the limit VA as the graded vector space direct limit of that sequence. However, we do not want to require the connecting maps to be VA homomorphisms. This means that we cannot simply work in the category of VAs and define the limit VA as the direct VA limit of the sequence; instead, we will have to construct the state-field map of the limit VA by hand.

The motivation to study limits of VOAs comes partly from physics. Let us therefore briefly explain their role in physics, and also explain why we are interested in connecting maps that are not VA homomorphisms. 
In physics, VOAs describe Conformal Field Theories (CFT) in two dimensions. They find a particularly interesting application in the AdS/CFT correspondence \cite{Maldacena:1997re,Aharony:1999ti}. This correspondence conjecturally maps theories of quantum gravity to certain types of CFTs, or, more precisely, to the limits of families of CFTs. In the original instance of the conjecture, the CFTs are described by the Lie group $SU(N)$, and the limit $N\to \infty$ is taken, so that the central charge diverges. For this reason such limits are often called \emph{large $N$ limits} or  \emph{large central charge limits}. As the central charge of two VOAs in the sequence is different, it is clear that the $f_{MN}$ are not VOA homomorphisms. Usually, they are not VA homomorphisms either. For instance, as we discuss in the second part of this article, often the limit VA has a very special property: it factorizes, even though the members of the family do not factorize. The limit is thus not homomorphic to the VAs in the sequence, and the connecting maps $f_{MN}$ are not VA homomorphisms either.

As mentioned above, our main interest is the limit of VOAs. However, it turns out that the limit VA is often no longer a VOA. For this reason, we find it most appropriate to work with \emph{grading-restricted VAs} \cite{MR3177939} rather than VOAs. 

A (grading-restricted) vertex algebra has two main ingredients: a graded vector space $V$, and a state-field map (or vertex operator map) $Y$ \cite{MR1651389,MR2023933}.
Constructing the limit VA $V^\infty$ from the family $(V^N)_{N\in\N}$ thus involves two steps. The first step is to construct its graded vector space $V^\infty$. This is relatively straightforward: We choose connecting maps $f_{MN}: V^M\to V^N$, and then define $V^\infty$ to be the direct limit of the system $(V^N,f_{MN})$. The role of the connecting map is to define how the vectors in the different $V^N$ are related. To ensure that $V^\infty$ is still grading-restricted, we need to impose some additional conditions on the connecting maps and on the vector spaces $V^N$. The most important condition is that the dimensions $\dim V^N_{(n)}$ converge as $N\to\infty$. We call a system that satisfies all these conditions a \emph{grading-restricted direct system}.  Physicists call the numbers $\dim V_{(n)}$ the \emph{spectrum}, and would therefore say that for this system ``the spectrum of $V^N$ converges".

Having constructed $V^\infty$, the second step is to construct a state-field map $Y_\infty$ on it. 
This is harder to do than the first step. As stressed above, we do \emph{not} assume that the connecting maps $f_{MN}$ are VA homomorphisms. That is, they are in general not compatible with the state field maps $Y^N$. We thus cannot simply define $Y^\infty$ as coming from a direct limit of a VA-system $(V^N,Y^N,f_{MN})$. Instead, we are forced to define the state-field map $Y^\infty$ by hand.
No surprisingly, ensuring its existence requires additional assumptions on the connecting maps. 

In our approach, we describe state-field maps $Y$ through their \emph{structure constants} $C_{abc}$,
\be
Y(b,z)c = \sum_{a\in \Phi}  z^{\wt(a)-\wt(b)-\wt(c)} C_{abc} a\ ,
\ee
that is the matrix elements of $Y(b,z)$ with respect to some homogeneous basis $\Phi$. We then define the structure constants of $Y_\infty$ as the limit of the structure constants of the $V^N$, 
\be
C^\infty_{abc}:= \lim_{N\to\infty} C^N_{abc}\ .
\ee
Crucially, we then need to ensure that the state-field map $Y_\infty$ defined in this way satisfies the axioms of a VA. Our first main result, theorem~\ref{thm:mainthm}, is that $Y_\infty$ indeed satisfies the Borcherds identity, as long as all structure constants converge: 
\begin{theorem-non}
The direct limit of a grading-restricted direct system of VAs is a grading-restricted VA, provided all structure constants converge.
\end{theorem-non}
Physicists would phrase this condition as ``all three point functions of $V^N$ converge".
The proof of this result uses the fact that we are working with grading-restricted VAs. 
It is possible to repeat our approach for VAs that are not grading-restricted, but it is then harder to establish that the state-field map exists and satisfies the VA axioms. Indeed, we give an example that has a VA-limit that is not grading-restricted in section~\ref{s:tensor}.

We mentioned above that we are primarily interested in the case where the $V^N$ are VOAs. In that case, the limit we consider is usually a large central charge limit, meaning the central charge of $V^N$ diverges as $N\to\infty$. The limit VA $V^\infty$ therefore does not contain a copy of the Virasoro algebra, and is thus not a VOA. It is however still a M\"obius VA: that is, the Lie algebra $sl(2,\C)$ generated by $L(0),L(1),L(-1)$ survives the limit. 
Moreover, if the $V^N$ are unitary, and the connecting maps are compatible with that unitary structure, then $V^\infty$ is also unitary. Such large central charge limits of unitary VOAs are probably the case of most interest in physics.

In the second part of this article we use our framework to prove certain physics conjectures; for this we focus on the special cases of permutation orbifold VAs. These conjectures have to do with existence and uniqueness of the VA limit. For instance, given a sequence of VAs, it is necessary to specify the connecting maps $f_{MN}$ in order to define its limit. However, there is a belief in physics that the choosing connecting maps does not a very important role in constructing the limit VA. On the one hand, given a family $V^N$ that has an appropriate limit as a graded vector space, it should be possible to find connecting maps which give convergent structure constants, leading to a consistent limit VA: in physics language, if the spectrum converges, then the three point functions almost automatically also converge.
On the other hand, it is also believed that if there are two different choices of connecting maps for which the limit exists, then the resulting limit VAs should be isomorphic.

Mathematically, it is clear that these  beliefs cannot hold in the generality stated above. To turn them into conjectures, we need to impose some further assumptions beyond the existence of the direct limit; physicists' belief is simply that these additional assumptions are relatively minor.  
For instance we cannot expect the structure constants $C^N_{abc}$ to converge automatically if the spectrum converges: a sequence $V^N$ that alternates between two VAs of identical spectrum but different structure constants (such as the $E8\times E8$ and the $SO(32)$ lattice VOAs) will have non-convergent structure constants, giving an immediate counterexample to the first belief. A better conjecture that actually has a chance of being true is that instead the structure constants remain bounded as $N\to\infty$. This weaker statement is usually enough for physicists, since then we can pick a convergent subsequence of $V^N$ to get a limit VA. 

We do indeed prove this weaker form of the conjecture for the case of permutation orbifolds.
In section~\ref{s:perm} we introduce permutation orbifold VAs \cite{Klemm:1990df, Dijkgraaf:1996xw, Borisov:1997nc, Bantay:1997ek}. Here, a permutation orbifold is what we call the fixed point sub-VA of an $N$-fold tensor product of a given VA under the action of a permutation group $G_N$. 
Based on previous investigations in the physics literature \cite{Lunin:2000yv,Belin:2014fna,Haehl:2014yla,Belin:2015hwa}, we established in \cite{Gemunden:2019hie} that the VA limit of permutation orbifolds exists, provided they satisfy a property we called \emph{nested oligomorphic}. That previous construction of the limit VA however was rather ad-hoc. In the language of this article, the nested oligomorphic condition guarantees that the permutation orbifolds form a grading-restricted system.

Such permutation orbifolds are an important example of large $N$ limits, and we establish several results. The first main result, theorem~\ref{thmoligo},
is that the structure constants of any nested oligomorphic permutation orbifold are bounded; by choosing suitable subsequences, it is thus always possible to find a limit VA: 
\begin{theorem-non}
Given a sequence of nested oligomorphic permutation orbifolds of grading-restricted VAs of CFT type, we can always find a subsequence that converges to a VA. 
\end{theorem-non}

The second main result, discussed in section~\ref{s:factorize}, has to do with factorization in the large $N$ limit.
We say a VA factorizes if it has a set of generators so that the commutator of the modes of any two generators only contain the identity operator. Such VAs are well-known in physics, as they allow to compute any correlation functions using Wick contractions. In particular, VOAs that appear in the AdS/CFT correspondence are expected to factorize in the large $N$ limit. In theorem~\ref{mainoligofactor} we establish a necessary and sufficient criterion for large $N$ factorization of oligomorphic permutation orbifolds involving the behavior of orbits: 
\begin{theorem-non}
The VA-limit of nested oligomorphic permutation orbifolds of grading-restricted VAs of CFT type factorizes if and only if the permutation orbifolds have no finite orbits. \end{theorem-non}

\emph{Acknowledgments:}
We thank Klaus Lux for useful discussions.  TG thanks the Department of Mathematics at University of Arizona for hospitality.
The work of TG was supported by the Swiss National Science Foundation Project Grant 175494.  The work of CAK is supported in part by the Simons Foundation
Grant No. 629215 and by NSF Grant 2111748.

\section{Large \texorpdfstring{$N$}{N} limits of vertex algebras}

\subsection{Grading-restricted vertex algebras}$ $

There are several different equivalent choices for the axioms of vertex algebras, stressing different aspects such as locality, associativity or commutativity  \cite{MR2023933}. For our purposes we find the following definition the most useful, which stresses Borcherds' identity \cite{MR843307}:

\begin{defn}\label{va}
 A {\it vertex algebra} $(V,\vac,Y)$ is a vector space $V$ with a distinguished non-zero vector $\vac$ (\emph{vacuum vector}) with a linear map $Y$ (\emph{state-field map})
 \be
 Y: V \to End(V)[[z,z^{-1}]] \qquad a \mapsto Y(a,z)=\sum_{n\in\Z}a_{n} z^{-n-1}
 \ee
 such that for all $v\in V$ $a_{(n)}v=0$ if $n$ is large enough (meaning $Y(a,z)$ is  a \emph{field}) satisfying
\begin{enumerate}
\item $Y(a,z)\vac=a+O(z)$ (\emph{creativity})
\item $Y(\vac,z)=\idV$ 
\item \emph{Borcherds' identity}: 
\begin{multline}
\sum_{j=0}^\infty \binom{m}{j}(a_{n+j}b)_{m+k-j}c = \\
\sum_{j=0}^\infty (-1)^j\binom{n}{j}a_{m+n-j}(b_{k+j}c)
- \sum_{j=0}^\infty (-1)^{j+n}\binom{n}{j}b_{n+k-j}(a_{m+j}c) \qquad \mathrm{for\ all\ } k,m,n\in \Z\ .
\end{multline}

\end{enumerate}
\end{defn}

See \eg \cite{MR1651389} for how this implies other, maybe more commonly used axioms of a VA. In particular note that when using this set of axioms, the commonly used translation operator $T$ or $L(-1)$ is defined as $Ta := a_{(-2)}\vac$.

Motivated by physics, we are actually most interested in \emph{vertex operator algebras} (VOAs) and their large central charge limit. However, for reasons that will become clear, this limit is not a VOA. It is thus more useful not to work in the framework of VOAs, but rather in the framework of \emph{grading-restricted vertex algebras} (see \eg \cite{MR3177939}:

\begin{defn}\label{grva}
 A {\it grading-restricted vertex algebra} is a vertex algebra $(V,\vac,Y)$  whose vector space $V$ is $\Z$-graded,
 \be
 V = \bigoplus_{n\in\Z} V_{(n)}\ ,
 \ee
 together with a linear map $L(0): V\to V$ defined as $L(0)v = n v$ for $v\in V_{(n)}$, satisfying
 \begin{enumerate}
\item $V_{(n)}=0$ if $n$ is sufficiently negative, and $\dim V_{(n)}<\infty$ for all $n\in\Z$ ({\it grading-restriction condition})
\item $$[L(0), Y(v, z)]=\frac{d}{dz}Y(v, z)+Y(L(0)v, z)$$
for $v\in V$. ({\it $L(0)$-bracket formula})
\end{enumerate}
\end{defn}

Note that a vertex operator algebra $(V,\vac,Y,\omega)$ is automatically a grading-restricted vertex algebra with $L(0)$ and $L(-1)$ given by the usual modes of the Virasoro field $Y(\omega,z)$.
If $a\in V_{(n)}$, we say $a$ has weight $\wt a=n$, and then $a_n$ is homogeneous of weight $-n+\wt a-1$.

\subsection{The setup}\label{ss:setup}
Let us now set up the VA limit of a family of grading-restricted vertex algebras.

\begin{defn}\label{Vdlimit}
Let $\left(V^N\right)_{N \in \N}$ be a sequence of grading-restricted vertex algebras together with a set of injective connecting maps $f_{MN}: V^M \to V^N$ for all $M \leq N$ satisfying
\begin{enumerate}
    \item $f_{NK}\circ f_{MN} = f_{MK}$ for all $M \leq N \leq K$.
    \item $f_{NN} = \mathbb{1}_{V^N}$ for all $N$.
    \item The $f_{MN}$ preserve grading and the vacuum element.
    \item \label{dimsat}
    For fixed $n \in \Z$, $\mathrm{dim}V^M_{(n)} = \mathrm{dim}V^N_{(n)}$  for all sufficiently large $M$ and $N$. 
    \item \label{dimzero} There is an $\bar n$ such that $V^N_{(n)}=0$ for $n<\bar n$ for sufficiently large $N$.
    \end{enumerate}
    We then call $(V^N,f_{MN})$ a \emph{grading-restricted direct system}.
\end{defn}
A few remarks are in order:
\begin{enumerate}
\item
For such a grading-restricted direct system, define $W^{\infty} = \bigoplus_{N \in \N} V^N$, and $\iota_N : V^N \to W^\infty$ the canonical inclusion map. Let $D \subset W^{\infty}$ be the subspace generated by elements of the form $\iota_M(u) - \iota_N\circ f_{MN}(u)$ for any $M \leq N$ and $u \in V^M$.
We then define $V^\infty$ to be the (linear) direct limit of the system $(V^N,f_{MN})$ given by
    \begin{equation}
        V^{\infty} = \varinjlim V^N = W^{\infty}/D\ .
    \end{equation}
\item
We also define $f_N$ to be the linear maps
\be
f_N:V^N \to V^{\infty}\qquad v \mapsto [\iota_N(v)]\ ,
\ee
where $[w]$ denotes the class of $w\in W^\infty$ in $V^{\infty}$. The maps $f_N$ are injective by injectivity of the $f_{MN}$ and satisfy $f_M = f_N \circ f_{MN}$ for all $M \leq N$.
\item
We insist that the connecting maps are injective. This is mainly for convenience, as it will make it easier to work with bases later on. 
\item We say the homogeneous subspace $V^M_{(n)}$ is \emph{saturated} if condition (\ref{dimsat}) holds for all $N>M$. In particular, the homogeneous components $f_{MN}^{(n)}: V^{M}_{(n)}\to V^{N}_{(n)}$ are bijective if $V^M_{(n)}$ is saturated.
\item Note that the connecting maps $f_{MN}$ do not need to be VA-homomorphisms. The system $(V^N,f_{MN})$ does therefore \emph{not} define a direct limit in the category of VAs.
    \item For more on direct limits in the context of VOAs see for example \cite{MR4380115}.

\end{enumerate}

We mention two immediate lemmas:
\begin{lem}\label{lem:urepresent}
    Any $u \in V^{\infty}$ can be written as $u=f_N(v)$ with $v \in V^N$ for some $N$.
\end{lem}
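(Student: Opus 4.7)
The plan is to unpack the construction of $V^\infty$ as the quotient $W^\infty/D$ and use the fact that any element of the direct sum $W^\infty$ is supported on only finitely many summands, so we can ``push everything up'' to a common index using the connecting maps.

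First, I would pick any representative $w \in W^\infty$ with $u = [w]$. Because $W^\infty = \bigoplus_{N \in \N} V^N$ is a direct sum, there is a finite set of indices $N_1 < N_2 < \cdots < N_k$ and elements $v_i \in V^{N_i}$ such that
\be
w = \sum_{i=1}^k \iota_{N_i}(v_i)\ .
\ee
Next, set $N := N_k$. For each $i < k$ the element $\iota_{N_i}(v_i) - \iota_N\bigl(f_{N_i N}(v_i)\bigr)$ lies in $D$ by the very definition of $D$, so it vanishes in $V^\infty$.

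Define $v := \sum_{i=1}^k f_{N_i N}(v_i) \in V^N$, using the convention $f_{NN} = \mathbb{1}_{V^N}$ (property~(2)) for the $i = k$ term. Then, taking classes in $V^\infty$ and using linearity of $\iota_N$,
\be
u = [w] = \sum_{i=1}^k \bigl[\iota_N(f_{N_i N}(v_i))\bigr] = \bigl[\iota_N(v)\bigr] = f_N(v)\ ,
\ee
which is the desired representation.

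There is no real obstacle here: the argument is the standard cofinality argument for direct limits of vector spaces indexed by $\N$, and relies only on properties~(1) and~(2) of the connecting maps together with the fact that $\N$ is directed with maxima on finite subsets. The grading-restriction conditions (4) and (5) play no role in this particular lemma; they will enter later when one wants to control the graded pieces of $V^\infty$.
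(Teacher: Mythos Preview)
Your proof is correct and follows essentially the same approach as the paper: pick a representative in $W^\infty$, use that it is a finite sum of terms $\iota_{N_i}(v_i)$, take $N$ to be the maximum of the $N_i$, and push all terms into $V^N$ via the connecting maps. The paper's version is slightly terser (it uses $f_M = f_N \circ f_{MN}$ directly rather than arguing via elements of $D$), but the content is the same.
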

\begin{proof}
    By construction, a general element $u \in W^{\infty}$ can be written as 
    \begin{equation}
        u = \sum_{i=1}^I \iota_{M_i}(v^i), 
    \end{equation}
    where $u^{i} \in V^{M_i}$ for a set of integers $\{M_i\}$. This means that $[u]\in V^{\infty}$ can be written as $\sum_{i=1}^I f_{M_i}(v^i)$.
    Taking $N = \max\{M_i\}$,     
    \begin{equation}
        u  = \sum_{i=1}^I f_{N}(f_{M_i N }(v^i)) = f_{N} \left[\sum_{i=1}^I f_{M_i N}(v^i)\right].
    \end{equation}
\end{proof}
 We will often write $u = f_N(u^N) =: u^N$ and suppress the $f_N$, where equality in $V^{\infty}$ is understood, call $u^N$ the representative of $u$ in $V^N$. It is unique in $V^N$ because $f_N$ is injective.
 
 \begin{lem}
    $V^{\infty}$ is graded by weights with finite-dimensional homogeneous subspaces.
\end{lem}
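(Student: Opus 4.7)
The plan is to equip $V^\infty$ with a natural grading inherited from the graded structure of the $V^N$ and the grading-preservation property of the connecting maps, and then verify the two requirements (decomposition into homogeneous pieces and finite-dimensionality of each piece) separately. Throughout, the key technical input will be that for each fixed $n$, the system $(V^N_{(n)}, f_{MN}^{(n)})$ eventually stabilizes: by condition~\ref{dimsat} and injectivity of the connecting maps, the restricted maps $f_{MN}^{(n)}: V^M_{(n)} \to V^N_{(n)}$ are isomorphisms for all sufficiently large $M \leq N$.

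First I would define $V^\infty_{(n)} := f_N(V^N_{(n)})$ for any $N$ large enough that $V^N_{(n)}$ is saturated. This is independent of the choice of such $N$: for $M \leq N$ both saturated at degree $n$, the compatibility $f_M = f_N \circ f_{MN}$ and bijectivity of $f_{MN}^{(n)}$ force $f_M(V^M_{(n)}) = f_N(V^N_{(n)})$. Finite-dimensionality of $V^\infty_{(n)}$ is then immediate from injectivity of $f_N$ and the grading-restriction condition on the individual $V^N$. Similarly, condition~\ref{dimzero} gives $V^\infty_{(n)} = 0$ for $n < \bar n$.

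Next I would show $V^\infty = \bigoplus_{n \in \Z} V^\infty_{(n)}$. That $V^\infty$ is spanned by the $V^\infty_{(n)}$ follows from Lemma~\ref{lem:urepresent}: any $u \in V^\infty$ is of the form $f_N(v)$ for some $v \in V^N$; decomposing $v = \sum_n v_n$ into homogeneous components (a finite sum, since $V^N$ is a grading-restricted VA) and using linearity of $f_N$ yields $u = \sum_n f_N(v_n)$ with each $f_N(v_n)$ in $V^\infty_{(n)}$ (enlarging $N$ if necessary so that every relevant $V^N_{(n)}$ is saturated). For directness, suppose $\sum_n u_n = 0$ with $u_n \in V^\infty_{(n)}$ and only finitely many nonzero. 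Using Lemma~\ref{lem:urepresent} and the compatibility of the $f_N$ with the $f_{MN}$, I would pull all representatives back to a common $V^N$ with $N$ large enough that each relevant degree is saturated, obtaining $u_n = f_N(w_n)$ with $w_n \in V^N_{(n)}$. Then $f_N(\sum_n w_n) = 0$, and injectivity of $f_N$ together with the direct sum decomposition of $V^N$ gives $w_n = 0$ for every $n$, hence $u_n = 0$.

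I do not expect a serious obstacle here; the proof is really a bookkeeping exercise about direct limits of graded vector spaces. The only point requiring a little care is that the grading on $V^\infty$ is well-defined and independent of the choice of representative $N$, which is precisely where the saturation condition~\ref{dimsat} and the grading-preservation of the $f_{MN}$ are used. Once that is in place, the two claimed properties follow directly from the corresponding properties of each $V^N$ combined with Lemma~\ref{lem:urepresent}.
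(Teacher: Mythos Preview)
Your proof is correct and follows essentially the same approach as the paper: both use saturation together with injectivity of the $f_N$ and Lemma~\ref{lem:urepresent} to produce a finite spanning set for each $V^\infty_{(n)}$. The only cosmetic difference is that the paper obtains the grading in one line from the quotient description $V^\infty = W^\infty/D$ (since $D$ is homogeneous), whereas you define $V^\infty_{(n)} := f_N(V^N_{(n)})$ directly and verify the direct-sum decomposition by hand.
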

\begin{proof}
    $V^\infty$ is graded because $W^\infty$ is graded and the $f_{MN}$ preserve the grading, so that quotienting by $D$ preserves the grading. 
    Let $\Phi^M:=\bigcup_{n} \Phi^M_n$ be a homogeneous basis of $V^M$, that is $\Phi^M_n$ a basis for $V^M_{(n)}$. For a fixed $n$, let $M$ be such that $V^M_{(n)}$ is saturated. 
    By lemma~\ref{lem:urepresent}, any vector $u\in V^\infty_{(n)}$ can be written as $f_N(u^N)$ for some vector $u^N$, where we can take $N\geq M$. Because the homogeneous components $f^{(n)}_{MN}$ are bijective, $u^N$ can be expressed as a linear combination of the vectors in $f^{(n)}_{MN}(\Phi^M_n)$, meaning that $u$ can be expressed as a linear combination in $f_M(\Phi^M_n)$. Moreover, since the $f_M$ are injective, the vectors $f_M(\Phi^j_n)$ are linearly independent, so that $\Phi_n:=f_M(\Phi^M_n)$ is indeed a finite basis of $V^\infty_{(n)}$.
\end{proof}
It follows that 
\be
\Phi:=\bigcup_{n} \Phi_n
\ee 
with the $\Phi_n$ constructed as above is a homogeneous basis of $V^\infty$. We will frequently use this basis in what follows.

\subsection{The restricted dual}
It will be useful to work with the dual space of limit VAs.
The dual space $(V^\infty)^*$ of $V^\infty$ itself is given by the inverse limit of the system $(V^N,f_{MN})$. For completeness, let us give the standard definition and properties of this construction.

 Let $\left(V^N\right)^*$ be the dual of $V^N$. Define the surjective, dual connecting maps (bonding maps) $f'_{MN}:\left(V^N\right)^* \to \left(V^M\right)^* $ for all $M \leq N$ by 
\begin{equation}
    \langle f'_{MN}(v'), u \rangle = \langle v', f_{MN}(u) \rangle,
\end{equation}
for all $v' \in \left(V^N\right)^*$ and $u \in V^M$, where we introduce the evaluation map $\langle v', u\rangle :=v'(u)$.
Then for all $M \leq N \leq K$ the following relations hold
\begin{equation}
    f'_{MN} \circ f'_{NK} = f'_{MK}.
\end{equation}

\begin{defn}
 The inverse limit of the duals is defined by
 \begin{equation}
     \varprojlim \left(V^N\right)^* = \{ v' \in \prod_{M = 1}^{\infty}\left(V^M\right)^*|v'_M = f'_{MN}(v'_N) \text{ for all } M \leq N  \}.
 \end{equation}
 For every $M$, there exists a canonical surjective map \begin{equation}
     \pi_M:\varprojlim \left(V^N\right)^* \to \left(V^M\right)^*, 
     v' \mapsto v'_M,
 \end{equation}
 such that 
 \begin{equation}
     \pi_M = f'_{MN}\circ \pi_N,
 \end{equation}
 for all $M \leq N$. The entry $v'_M$ is therefore the representative of $v'$ in $(V^M)^*$.
\end{defn}
There is a canonical (linear) isomorphism $\left(V^{\infty}\right)^* \cong \varprojlim \left(V^N\right)^*$, with the canonical pairing given by
\begin{equation}
\langle v', u \rangle =  \langle v'_N , u^N \rangle\ .  
\end{equation}
Note this definition does not depend on the choice of representative $u^N$ of $u$ since
\begin{equation}
    \langle v'_N , u^N \rangle = \langle v'_M , f_{MN}(u^M) \rangle = \langle f'_{MN}(v'_N) , u^M \rangle = \langle v'_M , u^M \rangle.
\end{equation}

When working with VOAs (or grading-restricted VAs), it is better not to work with the full dual space $V^*$, but rather the restricted graded dual $V'$ of $V$, defined as
 \be
 V':=\bigoplus V_{(n)}^* \subset V^*
 \ee
For $V^\infty$ it is given by 
 \be
 (V^\infty)':=\bigoplus(V^\infty_{(n)})^*\ .
 \ee
We can characterize it by restricting to $v'\in (V^\infty)^*$ such that
\be
v' |_{V^{\infty}_{(n)}}=0 \qquad \textrm{for almost all } n\ .
\ee

\subsection{Vertex operators and matrix elements}
In section~\ref{ss:setup}, we defined $V^\infty = \varinjlim V^N$ as a grading-restricted direct limit.
We now want to define the state-field map $Y_\infty$ on $V^{\infty}$ through its matrix elements:

\begin{defn}\label{matels}
Assuming that the limit exists,
we define the matrix elements of $Y_\infty$ on $V^\infty$ as
 \begin{equation}\label{Ymatdef}
     \langle v', Y_{\infty}(u,z)w\rangle = \lim_{N \to \infty} \langle v'_N,  Y_N(f_{MN}(u^M),z)f_{KN}(w^K) \rangle,
 \end{equation}
 for all $u,w \in V^{\infty}$ and $\vec{v'} \in \left(V^{\infty}\right)'$. Here $u^M \in V^M$ and $w^K \in V^K$ are representatives of $u$ and $w$, and we assume $N \geq M,K$. 
\end{defn}

A few remarks:
\begin{enumerate}
\item
This definition is independent of the choice of $M,K$: Choosing $M'\geq M$, for example, we find that 
\begin{equation}
\begin{split}
        \lim_{N \to \infty} \langle v'_N,  Y_N(f_{M'N}(u^{M'}),z)f_{KN}(w^K) \rangle & =   
        \lim_{N \to \infty} \langle v'_N,  Y_N(f_{M'N}(f_{M M'}(u^M)),z)f_{KN}(w^K) \rangle \\
       & = \lim_{N \to \infty} \langle v'_N,  Y_N(f_{MN}(u^M),z)f_{KN}(w^K) \rangle.
        \end{split}
\end{equation}
\item
The limit in (\ref{Ymatdef}) is the ordinary limit in $\C$ order by order in the formal power series in $z$. Equivalently, we can write $Y_\infty(u,z)x$ as a limit in the restricted weak topology, that is the weak topology with respect to the restricted dual $V'$,
\begin{equation}
    Y_{\infty}(u,z)w = \lim_{N \to \infty} f_N\left(Y_N(u^N,z)w^N\right)\ .
\end{equation}
\end{enumerate}

We now use this to define the structure constants and the state field map on $V^\infty$.
Let $\Phi:=\bigcup_{n} \Phi_n$ be a homogeneous basis of $V^\infty$. Because all $V^\infty_{(n)}$ are finite dimensional, we can pick a homogeneous dual basis $\Phi'$, so that for $b\in \Phi, a \in \Phi'$
\be
\langle a, b\rangle = \delta_{a,b}\ .
\ee
For convenience we will simply identify $\Phi$ and $\Phi'$ and their vectors.
Since $a,b,c$ are homogeneous, $ \langle a, Y_{\infty}(b,z)c\rangle= \langle a, b_{\wt(c)+\wt(b)-\wt(a)-1} c \rangle 
z^{\wt(a)-\wt(b)-\wt(c)}$. This leads to the following definition:

\begin{defn}\label{strcst}
Let $(V^N,f_{MN})$ be a grading-restricted direct system, and $\Phi$ a homogeneous basis of $V^\infty$ as above. For $a,b,c\in \Phi$ with representatives $a^N,b^N,c^N$, define the \emph{structure constants} 
\be
C^N_{abc} := \langle a^N, b^N_{\wt(c^N)+\wt(b^N)-\wt(a^N)-1} c^N \rangle\ ,
\ee
and
\be
C^\infty_{abc}:= \lim_{N\to\infty} C^N_{abc}\ .
\ee
Assuming the limit in definition~\ref{strcst} exists for all $a,b,c\in \Phi$,
we define the map $Y_\infty: V^\infty\to End(V^\infty)[[z,z^{-1}]]$ as
\be
Y_\infty(b,z)c := \sum_{a\in \Phi}  z^{\wt(a)-\wt(b)-\wt(c)} C^\infty_{abc} a
\ee
\end{defn}
Note that this definition is compatible with definition~\ref{matels}. Also note that for convenience of notation, we used the dual basis in the definition of $C_{abc}$; instead, we could of course have considered $Y(b,z)c$ and read off the coefficient of $a$, as $Y(b,z)c = \sum_{a\in \Phi}  z^{\wt(a)-\wt(b)-\wt(c)} C_{abc} a$.

\subsection{The VA-limit}
Let us now establish the first main result of our paper:
\begin{thm}\label{thm:mainthm}
Let $(V^N,f_{MN})$ be grading-restricted direct system with limit $V^\infty$,  $\vac := f_N(\vac)$ for some $N$, and assume that the limit of all the structure constants as in definition~\ref{strcst} exists. Then $(V^\infty,\vac,Y_\infty)$ is a grading-restricted vertex algebra, the  \emph{(grading-restricted) VA-limit} of the system $(V^N,f_{MN})$.
\end{thm}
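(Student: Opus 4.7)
The plan is to verify each axiom of a grading-restricted vertex algebra for $(V^\infty, \vac, Y_\infty)$ by transferring the corresponding axiom at each finite stage $V^N$ through the convergence of structure constants. The grading-restriction condition is already secured by the preceding lemma together with item (\ref{dimzero}) in the definition of a grading-restricted direct system, and the field property $b_n c = 0$ for $n$ large follows at once: $b_n c$ must be homogeneous of weight $\wt(b) + \wt(c) - n - 1$, and $V^\infty_{(m)} = 0$ for $m$ sufficiently negative.

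For the vacuum and creativity axioms, I would first refine the bases $\Phi^N$ of $V^N$ so that $f_{MN}(\Phi^M_n) = \Phi^N_n$ on saturated weights (possible since $f_{MN}$ is injective and grading-preserving). The structure constants at each finite stage then encode $Y^N(\vac, z) = \idV$ and $Y^N(b^N, z)\vac = b^N + O(z)$: the identities $C^N_{a, \vac, c} = \delta_{ac}$ and $C^N_{a, b, \vac} = \delta_{ab}$ hold at each $N$ and pass immediately to the limit. The $L(0)$-bracket formula likewise reduces to a weight identity on structure constants that is preserved by the grading-preserving $f_{MN}$.

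The substantive work is Borcherds' identity. For fixed $a, b, c, d \in \Phi$ and integers $k, m, n$, I would pair Borcherds' identity in $V^N$ with the dual basis vector $d$. Expanding each term through intermediate basis vectors gives, for example,
\be
\langle d^N, (a^N_{n+j} b^N)_{m+k-j} c^N\rangle = \sum_{e} C^N_{e, a, b}\, C^N_{d, e, c},
\ee
and analogous bilinear expressions for the right-hand side terms. The index $e$ ranges over the finite-dimensional graded piece $\Phi_{\wt(a) + \wt(b) - n - j - 1}$, which stabilizes to a set independent of $N$ once the relevant weight is saturated; similarly the outer sum over $j$ is finite and of bounded range, uniformly in $N$, by the field property. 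Hence, for $N$ large, Borcherds' identity paired with $d$ becomes a finite polynomial identity in finitely many $\{C^N_{\cdot, \cdot, \cdot}\}$ holding in $V^N$; the hypothesis that every structure constant converges then lets us pass to the limit term by term, yielding the identity in $V^\infty$. Since $d \in \Phi$ is arbitrary and $(V^\infty)'$ separates points, this gives Borcherds' identity for $Y_\infty$.

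The main obstacle I anticipate is the bookkeeping needed to certify that these finite index sets stabilize and that a single $N$ suffices uniformly for all structure constants appearing in a given instance of Borcherds' identity. This is not analytically deep --- the grading restriction and finite-dimensionality of the homogeneous subspaces do essentially all the work --- but it must be handled carefully so that the term-by-term passage to the limit is legitimate. The saturation conditions and the compatibility of the bases $\Phi^N$ with the connecting maps $f_{MN}$ are precisely what make this passage rigorous.
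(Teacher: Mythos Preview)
Your proposal is correct and follows essentially the same route as the paper's proof: both arguments verify the grading-restriction and field conditions from the definition, transfer the vacuum/creativity/$L(0)$-bracket axioms through the limit, and then reduce Borcherds' identity to a finite bilinear identity in structure constants that can be passed to the limit term by term. The only cosmetic difference is that the paper quotes a pre-packaged structure-constant form of Borcherds' identity (equation~(\ref{3ptJacobi}), from \cite{Gemunden:2019hie}) in which the sum over $j$ and the insertion of intermediate basis vectors are merged into a single sum over $d\in\Phi$, whereas you derive this reformulation directly; your observation that the $j$-range is bounded uniformly in $N$ is exactly the paper's observation that the condition $j_i\geq 0$ together with the grading restriction forces only finitely many intermediate weights to contribute.
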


\begin{proof}
First we note that $V^\infty$ satisfies the grading-restriction condition due to (\ref{dimsat}) and (\ref{dimzero}) of definition~\ref{Vdlimit}.
This immediately implies that
$Y_\infty(v,z)$ is a field for any $v$ because $V_\infty$ is grading-restricted, \ie $\wt(a)$ is bounded from below.

Next, 
\be
Y_\infty(u,z)\vac = \lim_{N\to\infty}f_N (Y_N(u^N,z)\vac) = \lim_{N\to\infty}f_N(u^N+O(z))= u+O(z)
\ee
establishes creativity, and $\lim_{N\to\infty}f_N(Y_N(\vac,z)u^N)=u$ establishes $Y_\infty(\vac,z)=1_{V^\infty}$.

To show the $L(0)$ bracket formula, note that the $L(0)$-operator commutes with the connecting maps in the sense that
\begin{equation}
    f_{MN}(L_M(0)v) = L_N(0)f_{MN}(v),
\end{equation}
and that $L(0)v = \lim_{N\to\infty} f_N(L_N(0)v^N)$.
It then follows 
\begin{equation}
     \begin{split}
     [L(0),Y_{\infty}(v,z)]u & = \lim_{N \to \infty} f_N([L_N(0),Y_{N}(v^N,z)]u^N) \\
     & = \lim_{N \to \infty}f_N( \frac{d}{dz}Y_N(v^N,z)u^N + Y_N(L_N(0)v^N,z)u^N \\
     & = \frac{d}{dz}Y_{\infty}(v,z)u + Y_{\infty}(L(0)v,z)u,
     \end{split}
\end{equation}
where we can exchange limit and the formal derivative term by term in the formal power series.

Finally, let us prove that $Y_\infty$ satisfies Borcherds' identity. Let $\Phi$ be a homogeneous basis of $V^\infty$. By \cite{Gemunden:2019hie}, Borcherds' identity is satisfied if the following condition on the structure constants holds:
Defining 
\be
j_1=\wt(b)+\wt(a)-\wt(d)-n-1\ , \ j_2=\wt(c)+\wt(b)-\wt(d)-k-1\ , \ j_3=\wt(c)+\wt(a)-\wt(d)-m-1\ ,
\ee
for all $a,b,c,e \in \Phi$
\be\label{3ptJacobi}
\sum_{d\in\Phi} \binom{m}{j_1} C^\infty_{edc}C^\infty_{dab} =
\sum_{d\in\Phi} (-1)^{j_2}\binom{n}{j_2} C^\infty_{ead}C^\infty_{dbc} - \sum_{d\in\Phi} (-1)^{j_3+n}\binom{n}{j_3} C^\infty_{ebd}C^\infty_{dac}
\ee
holds for all $k, n,m\in \Z$ such that $j_1,j_2,j_3\geq 0$.

Note that due to the condition $j_i\geq 0$ and the grading-restriction condition of $V^\infty$, for fixed $k,n,m$ only finitely many weights $\wt(d)$ contribute. Since the homogeneous subspaces $V^\infty_{(n)}$ are finite dimensional, the sum over $d\in \Phi$ has only finitely many terms.

We can thus find an $N$ that gives representatives $a^N,b^N,c^N,e^N,d^N$. If $N$ is large enough, then the $d^N$ form a basis for the relevant homogeneous subspaces $V^N_{(n)}$. 
The analog of identity (\ref{3ptJacobi}) with structure constants $C^N_{edc}$ is then automatically satisfied for all $N$, since the $Y_N$ are vertex operator maps. We can thus take the limit $N\to\infty$ of those identities, and exchange the limit with the finite sum over $d\in \Phi$ to establish that (\ref{3ptJacobi}) holds for the structure constants $C^\infty_{edc}$.

\end{proof}

\section{A (non-)example: Tensor Power VAs}\label{s:tensor}
Let us now construct some examples of such limits. Our first example is in a sense a non-example: the assumptions of definition~\ref{Vdlimit} are not satisfied, and even though the VA limit exists, it turns out to be a VA that violates the grading restriction axiom, and is hence not a grading-restricted VA. However, the example will serve as a useful starting point for the permutation orbifolds discussed in the next section.

\subsection{Seed VAs and tensor products}\label{ss:seedtensor}
Let $V$ be a grading-restricted VA that is of the form \be\label{CFTtype}
V=\C\vac\oplus \bigoplus_{n=1}^\infty V_{(n)}\ . 
\ee
In the context of VOAs, this is usually called a VOA of \emph{CFT type}. We call $V$ the \emph{seed VA}.
For future use we note that for such a VA, unless  $a\in \C\vac$,
\be\label{1ptfctvanish}
C_{a\vac\vac}=C_{\vac a\vac}=C_{\vac\vac a}=0\ ,
\ee
because of the identity and creativity properties.

Now let us consider tensor products of the seed VA $V$. Let $I_N:=\{1,2,\ldots, i_N\}$ be the set of the first $i_N$ numbers.
Denote by $V^{\otimes |I_N|}$ the $|I_N|$-th tensor power of the seed VA, with the grading given by the sum of the gradings of the individual factors. Clearly this is again a grading-restricted VA of the form (\ref{CFTtype}).

For future use, it will be useful to describe a basis of this tensor product VA in the following way: Let $\Psi$ be a homogenenous basis of $V$ with $\vac$ the basis vector for $V_{(0)}$, and $a$ be a function $I_N \to \Psi$. We define the weight of $a$ as 
\be
|a|:=\sum_{i \in I_N} \wt(a(i))\ ,
\ee
and its  support as 
\be
\supp (a) := \{ i\in I_N : a(i)\neq \vac\}\ .
\ee
Such a function $a$ defines a vector in $V^{\otimes |I_N|}$ by
\be\label{fcttovec}
a = \bigotimes_{i\in I_N}a(i)\ ,
\ee
which by abuse of notation we denote by the same symbol $a$.
Let $\cF^N_n$ be the set of all such functions $a: I_N\to\Psi$ of weight $n$. It is then clear that (by the same abuse of notation) $\cF^N = \bigcup_n \cF^N_n$ forms a homogeneous basis of $V^{\otimes |I_N|}$. 
Next, for a tensor product state $v=\bigotimes_{i \in I_N} v_i \in V^{\otimes |I_N|}$, we define its support
\be
\cK_v = \supp(v) := \{ i \in i_N: v_i \notin V_{(0)}\}\ .
\ee
For vectors that come from functions as in (\ref{fcttovec}), the two definitions of support of course agree.
Since $V^{\otimes N}$ is a direct sum of vector spaces of definite support, in the future we will mostly work with states of definite support, and extend our results by linearity if needed.

\subsection{Connecting maps}
Now assume that $|I_N|$ is monotonically growing in $N$.
For all $M \leq N$ we define the connecting maps 
\begin{equation}
    \begin{split}
        g_{MN}: & V^{\otimes |I_M|} \to  V^{\otimes |I_N|} \\
        & v_1 \otimes \ldots v_{|I_M|} \mapsto  v_1 \otimes \ldots v_{|I_M|} \otimes \underbrace{\vac \otimes\ldots\otimes \vac}_{|I_N|-|I_M|}.
    \end{split}
\end{equation}
$(V^{\otimes |I_N|},g_{MN})$ is then not quite a grading-restricted direct system: It is clear that conditions (1)--(3) of definition~\ref{Vdlimit} are satisfied, and by virtue of (\ref{CFTtype}), so is (5). Unless $V$ is trivial however, the dimensions of most $V^{\otimes |I_N|}_{(n\geq 1)}$ will diverge, so that (4) is violated.  However, we will now prove that it is still possible to define a VA structure on $V^\infty$.

\subsection{Duals} ${ }$
We can construct the dual spaces in the same way as before: the duals of the components are
\begin{equation}
\left( V^{\otimes |I_N|} \right)^* = \left( V^* \right)^{\otimes |I_N|},
\end{equation}
where the canonical pairing is given by 
\begin{equation}
    \langle v'_1 \otimes \ldots \otimes v'_{|I_N|}, v_1 \otimes \ldots \otimes v_{|I_N|} \rangle = \prod_{i = 1}^{|I_N|} \langle v'_i,v_i \rangle.
\end{equation}
As before, we construct the dual $(V^{\otimes\infty})^*$ as the inverse limit, and the restricted dual $(V^{\otimes\infty})'$ as the appropriate subset.

For what follows however it is useful to construct an appropriate decomposition of $(V^{\otimes \infty})$. 
Let $\vac' \in V'_0$ be the unique functional satisfying $\langle \vac', \vac \rangle = 1$  and define connecting maps on the dual spaces for all $M \leq N$ by
\begin{equation}
    \begin{split}
        \tilde g_{MN}: & \left(V^{\otimes |I_M|}\right)' \to  \left(V^{\otimes |I_N|}\right)' \\
        & v'_1 \otimes \ldots \otimes v'_{|I_M|} \mapsto  v'_1 \otimes \ldots \otimes v'_{|I_M|} \otimes \underbrace{\vac' \ldots \vac'}_{|I_N| - |I_M|}.
    \end{split}
\end{equation}
These are again connecting maps, so that we can take the direct limit of the system $(\left(V^{\otimes |I_N|}\right)^*,\tilde g_{MN})$.
Note that $\varinjlim \left(V^{\otimes i}\right)^* \subset \left(V^{\otimes \infty}\right)^* = \varprojlim \left(V^{\otimes i}\right)^*$.
This gives injective maps 
\begin{equation}
    \begin{split}
        \tilde g_{N}: & \left(V^{\otimes |I_N|}\right)' \to  \varinjlim \left(V^{\otimes |I_N|}\right)^* \\
        & v'_1 \otimes \ldots \otimes v'_{|I_N|} \mapsto  [v'_1 \otimes \ldots \otimes v'_{|I_N|}].
    \end{split}
    \end{equation}

We can use this to find an orthogonal decomposition of $\left(V^{\otimes \infty}\right)'$: 
\begin{lem}
For all $N\in \N$,
 \begin{equation}\label{Vtensrest}
     \left(V^{\otimes \infty}\right)' = Im(\tilde g_N) \bigoplus Im(g_N)^{\perp}\ ,
 \end{equation}
 where $Im(g_N)^{\perp}$ is the annihilator of $Im(g_N)$.
\end{lem}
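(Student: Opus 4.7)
The plan is to exhibit an explicit projection $(V^{\otimes\infty})'\to \text{Im}(\tilde g_N)$ along $\text{Im}(g_N)^\perp$, which will simultaneously yield both that the intersection is trivial and that the sum exhausts $(V^{\otimes\infty})'$. Using the identification $(V^{\otimes\infty})^*\cong\varprojlim (V^{\otimes |I_M|})^*$, I would represent an element $\phi \in (V^{\otimes\infty})'$ as a compatible family $(\phi_M)_M$, and use the $N$-th component $\phi_N$ as the data defining the projection.

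The first preparatory step is to verify that each component $\phi_M$ actually lies in the restricted dual $(V^{\otimes|I_M|})'$. Since $g_M$ is grading-preserving, the identity $\langle \phi_M, v\rangle = \langle \phi, g_M(v)\rangle$ translates the bounded weight support of $\phi$ into a bounded weight support for $\phi_M$. With $\phi_N \in (V^{\otimes|I_N|})'$ in hand, I would define $\phi^{(1)} := \tilde g_N(\phi_N)$ and $\phi^{(2)} := \phi - \phi^{(1)}$. The key observation is that the $N$-th component of $\tilde g_N(\phi_N)$ is $\phi_N$ itself (because $\langle \vac',\vac\rangle = 1$, so that projecting the padded tensor back to level $N$ reproduces $\phi_N$), whence for every $v\in V^{\otimes |I_N|}$ one has $\langle \phi^{(2)}, g_N(v)\rangle = \langle \phi_N, v\rangle - \langle \phi_N, v\rangle = 0$, placing $\phi^{(2)}$ in $\text{Im}(g_N)^\perp$.

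For triviality of the intersection, if $\phi = \tilde g_N(\psi)\in \text{Im}(g_N)^\perp$, then the same computation gives $\langle \psi, v\rangle = \langle \phi, g_N(v)\rangle = 0$ for every $v\in V^{\otimes |I_N|}$, forcing $\psi=0$ and hence $\phi=0$. Together with the previous paragraph this establishes the direct sum decomposition.

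The main technical subtlety I expect to encounter is confirming that $\tilde g_N(\phi_N)$ really lies in the \emph{restricted} dual $(V^{\otimes\infty})'$ and not merely in the full dual $(V^{\otimes\infty})^*$. I would handle this using the CFT-type assumption (\ref{CFTtype}): the functional $\vac'$ annihilates all positive-weight vectors of $V$, so padding $\phi_N$ with vacuum duals does not enlarge the weight support. Concretely, if $\phi_N$ vanishes on $V^{\otimes |I_N|}_{(n)}$ for $n>K$, then each lifted component $\tilde g_{NM}(\phi_N)$ vanishes on $V^{\otimes |I_M|}_{(n)}$ for $n>K$, so $\tilde g_N(\phi_N)$ vanishes on $V^{\otimes\infty}_{(n)}$ for $n>K$ as required.
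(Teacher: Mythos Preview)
Your proof is correct. Both your argument and the paper's construct the same projection onto $\mathrm{Im}(\tilde g_N)$ along $\mathrm{Im}(g_N)^\perp$, but they describe it differently. The paper works weight by weight: it uses that $\mathrm{Im}(g_N)_{(n)}$ and $\mathrm{Im}(\tilde g_N)_{(n)}$ are finite-dimensional and dually paired, chooses dual bases $\{e_j\}$, $\{e'_i\}$, and sets $P(\phi)=\sum_i\langle\phi,e_i\rangle\,e'_i$; unwinding this, one finds $P(\phi)=\tilde g_N(\phi_N)$, exactly your map. You instead exploit the inverse-limit description of $(V^{\otimes\infty})^*$ and read off the $N$-th component directly. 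What your route buys is a coordinate-free construction that does not invoke finite dimensionality of the weight spaces, and you explicitly verify (using the CFT-type hypothesis on $V$) that $\tilde g_N(\phi_N)$ lands in the \emph{restricted} dual rather than merely the full dual---a point the paper handles implicitly by working one graded piece at a time. The paper's version is shorter and makes transparent that the statement reduces to the standard linear-algebra fact about a finite-dimensional subspace non-degenerately paired with a subspace of the dual.
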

\begin{proof}
Consider $(V^{\otimes\infty}_{(n)})^*$. We know that for all $N$, $Im(\tilde g_N)_{(n)}\subset (V^{\otimes\infty}_{(n)})^*$ is finite dimensional,
and satisfies
\be
Im(g_N)_{(n)}\cong Im(\tilde g_N)_{(n)}\ .
\ee
Pick a basis $\{e'_i\}$ of $Im(\tilde g_N)_{(n)}$ and a dual basis $\{e_j\}$ of $Im(g_N)_{(n)}$. Then define $P(v):= \sum_i \langle v,e_i\rangle e_i'$. In the usual way, $v=P(v) + (v-P(v))$ then gives the decomposition $(V^{\otimes\infty}_{(n)})^*=Im(\tilde g_N)_{(n)}\bigoplus Im(g_N)^\perp_{(n)}$, from which (\ref{Vtensrest}) follows. 
\end{proof}

\subsection{The Vertex Algebra \texorpdfstring{$V^{\otimes \infty}$}{V infinity}}

\begin{thm}
The limit $(V^{\otimes\infty},\vac,Y_\infty)$ of the system $(V^N,g_{MN})$ exists and is a vertex algebra.
\end{thm}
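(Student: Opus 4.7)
The crucial special feature of this example (in contrast to the permutation orbifold case treated later) is that the connecting maps $g_{MN}$ are themselves vertex algebra homomorphisms: because $Y(\vac,z)=\idV$ on the padded tensor factors, a direct computation shows
\begin{equation}
g_{MN}\bigl(Y_M(u,z)v\bigr) = Y_N\bigl(g_{MN}(u),z\bigr)\,g_{MN}(v)
\end{equation}
for all $u,v \in V^{\otimes |I_M|}$. Thus, even though the system fails the dimension-stabilization condition (\ref{dimsat}) and so Theorem~\ref{thm:mainthm} is not directly applicable, the categorical direct-limit structure is available and will let us bypass the grading-restriction step entirely.

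My plan is then to define $Y_\infty$ directly, without first passing through structure constants. Given $u,w \in V^{\otimes\infty}$, choose by Lemma~\ref{lem:urepresent} a common $N$ with representatives $u^N,w^N \in V^{\otimes|I_N|}$, and set
\begin{equation}
Y_\infty(u,z)w := f_N\bigl(Y_N(u^N,z)w^N\bigr).
\end{equation}
The first step is to verify this is independent of the choice of $N$. If $N' \geq N$, then $u^{N'} = g_{NN'}(u^N)$ and $w^{N'} = g_{NN'}(w^N)$, and the homomorphism property above combined with $f_{N'} \circ g_{NN'} = f_N$ gives $f_{N'}(Y_{N'}(u^{N'},z)w^{N'}) = f_N(Y_N(u^N,z)w^N)$. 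Since $Y_N(u^N,z)w^N \in V^{\otimes|I_N|}((z))$ has only finitely many negative powers of $z$ and $f_N$ is grading-preserving, $Y_\infty(u,z)w$ is a well-defined formal Laurent series in $z$ with coefficients in $V^{\otimes\infty}$, and $Y_\infty(u,z)$ is a field.

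Next I would verify the axioms one at a time, all by the same mechanism of pushing forward identities through $f_N$. Creativity: $Y_\infty(u,z)\vac = f_N(Y_N(u^N,z)\vac) = f_N(u^N + O(z)) = u + O(z)$. Vacuum: $Y_\infty(\vac,z)w = f_N(Y_N(\vac,z)w^N) = f_N(w^N) = w$. For Borcherds' identity, both sides of the relation in Definition~\ref{va} for $Y_\infty$ applied to a triple $(a,b,c) \in (V^{\otimes\infty})^3$ can, for any fixed $k,m,n$, be written as $f_N$ of the corresponding expression in $Y_N$ applied to common representatives $a^N,b^N,c^N$; since the identity holds in $V^{\otimes|I_N|}$, it holds after applying $f_N$.

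The main conceptual point — and what I expect would be the only real obstacle if one insisted on going through Definition~\ref{strcst} rather than the shortcut above — is that the sum $\sum_{a\in\Phi} z^{\wt(a)-\wt(b)-\wt(c)}C^\infty_{abc}\,a$ in the definition of $Y_\infty(b,z)c$ is now a priori infinite in each weight, because $V^{\otimes\infty}_{(n)}$ is infinite-dimensional. The above shortcut sidesteps this by defining $Y_\infty$ as the class of a concrete element, but one can also check a posteriori that for any fixed $b,c$ only finitely many basis vectors $a$ with a given weight contribute to $Y_\infty(b,z)c$: the support of $Y_N(b^N,z)c^N$ is contained in $\supp(b^N)\cup\supp(c^N)$, which is a finite subset of $I_N$, so only basis vectors $a$ whose support lies in this finite set can appear. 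In particular the structure constants $C^N_{abc}$ stabilize in $N$ (padding with vacua contributes only the trivial factor $C_{\vac\vac\vac}=1$) and the limiting structure constants $C^\infty_{abc}$ are consistent with the pushforward definition above.
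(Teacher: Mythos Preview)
Your proof is correct, and it takes a more direct route than the paper. The paper stays within its general matrix-element framework from Definition~\ref{matels}: it verifies convergence of $\langle v', Y_\infty(u,z)w\rangle$ by invoking the orthogonal decomposition $(V^{\otimes\infty})' = Im(\tilde g_N) \oplus Im(g_N)^\perp$, showing the pairing vanishes on the second summand and stabilizes on the first; it then adapts the Borcherds argument of Theorem~\ref{thm:mainthm}, using the observation that $b_{(n)}a \in Im(g_N)$ to supply the finite-dimensional subspace needed for the basis-insertion step. You instead observe up front that the $g_{MN}$ are vertex algebra homomorphisms (a fact the paper also uses, though less prominently) and define $Y_\infty$ as a categorical direct limit of the $Y_N$, so that all axioms follow immediately by pushforward without any convergence argument. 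The paper itself acknowledges this alternative in the paragraph after its proof. Your approach is cleaner for this particular example; the paper's has the pedagogical advantage of showing how its structure-constant machinery can still be salvaged when the grading-restriction fails, which sets the stage for the permutation orbifold case where the connecting maps are genuinely \emph{not} VA homomorphisms and no such shortcut is available.
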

\begin{proof}
Note that we cannot directly apply theorem~\ref{thm:mainthm} because, as pointed out above, the grading restriction condition is not satisfied, and in general $\dim V^\infty_{(n)}=\infty$. First, we show that $Y_\infty$ is well-defined.
   Let $M$ be such that $u,w \in Im(g_M)$ and $v' \in \left(V^{\otimes \infty}\right)'$. The vertex operators are defined as
   \begin{equation} 
       \langle v', Y_{\infty}(u,z)w\rangle = \lim_{N \to \infty} \langle v'_N, Y_N(u^N,z)w^N \rangle
   \end{equation}
   We first note that for $u,v\in V^{\otimes|I_M|}$,
  \begin{multline}
  Y_N(g_{MN}(u),z)g_{MN}(v)=Y(u_1,z)v_1 \otimes \ldots \otimes Y(u_{|I_M|},z)v_{|I_M|} \otimes (\vac)^{\otimes(|I_N|-|I_M|)} \rangle\\ = g_{MN} (Y_M(u,z)v) \rangle
  \end{multline}
   Now we use the decomposition (\ref{Vtensrest}):
For $v' \in Im(g_M)^{\perp}$,  
   \begin{equation}
       \langle v', Y_{\infty}(u,z)x\rangle = \lim_{N \to \infty} \langle v'_N, Y_N(g_{MN}(u^M),z)g_{MN}(w^M) \rangle = 
       \lim_{N \to \infty} \langle v'_N, Y_N(u^N,z)w^N \rangle=
       0,
   \end{equation}
   while for $v' \in Im(\tilde g_M)$, we find that
      \begin{equation} 
   \begin{split}
       \langle v', Y_{\infty}(u,z)w\rangle & = \lim_{N \to \infty} \langle v'_N, Y_N(u^N,z)w^N \rangle \\
       & = \lim_{N \to \infty} \langle v'_1 \otimes \ldots \otimes v'_{|I_M|} \otimes (\vac')^{\otimes (|I_N|-|I_M|)}, Y(u_1,z)w_1 \otimes \ldots \otimes Y(u_{|I_M|},z)w_{|I_M|} \otimes (\vac)^{\otimes (|I_N|-|I_M|)} \rangle \\
       & = \langle v'_M, Y(u^M,z)w^M \rangle.
    \end{split}
   \end{equation}

   Hence the structure constants $C_{abc}$ converge and $Y_\infty$ exists. Moreover, $Y_\infty$ is a state-field map since $\wt(a)\geq 0$ for all vectors $a$. Creativity, identity and the $L(0)$ bracket formula follow by the same argument as in theorem~\ref{thm:mainthm}.
     
   Finally, Borcherds' identity follows by a similar argument as in theorem~\ref{thm:mainthm}. The complication here is that the homogeneous components $V^\infty_{(n)}$ are no longer finite dimensional, so that we can no longer find a $N$ such that the $d^N$ are representatives of a basis of $V^\infty_{(n)}$. However, note that by construction of the tensor product that if $a,b$ have representatives $a^N,b^N$, then $b_{(n)} a \in Im(g_N)$, which has finite dimensional homogeneous components. We can thus pick a finite basis $d^N$ of $Im(g_N)$ and insert it in the same way as in theorem~\ref{thm:mainthm} to establish Borcherds' identity.
\end{proof}

We note that in this specific example, the connecting maps $g_{MN}$ actually are VA homomorphisms. Therefore, we could have constructed $V^\infty$ as the direct limit of (not grading-restricted) VAs, without worrying about convergence of the structure constants. If the seed VAs are VOAs, then the $g_{MN}$ are still not VOA homomorphisms, since the conformal vector does not get mapped to the conformal vector.

\subsection{An action by \texorpdfstring{$S_{\infty}$}{S infinity}}\label{ss:s_infty}
We established that tensor product VAs have a large $N$ limit. This limit however is not a grading-restricted VA. For this reason, we want to investigate limits of permutation orbifolds instead.

For ease of notation, let us take $I_N=\{1,2,\ldots N\}$ for the moment.
Any permutation $\sigma \in S_N$ acts naturally as an VA automorphism on $V^{\otimes N}$ by 
\begin{equation}
    \sigma \cdot v_1 \otimes \ldots \otimes v_N = v_{\sigma^{-1}(1)} \otimes \ldots \otimes v_{\sigma^{-1}(N)} 
\end{equation}
The symmetric group $S_N$ is thus a group of automorphisms of $V^{\otimes N}$.

For all $M \leq N$, define connecting maps  $\phi_{MN}: S_M \to S_N$ by mapping a permutation $\sigma \in S_M$ to the corresponding permutation $\sigma^N \in S_N$, that acts trivially on the last $N-M$ objects.
Clearly, the $\phi_{MN}$ satisfy 
\begin{align}
    & \phi_{NK}\circ \phi_{MN} = \phi_{MK} \text{ for all } M \leq N \leq K \\
    & \phi_{NN} = 1 \text{ for all } N.
\end{align}
Furthermore, they are compatible with the connecting maps $g_{MN}$ in the sense that
\begin{equation}\label{eq:perm_comp}
    g_{MN}(\sigma \cdot v) = \phi_{MN}(\sigma) \cdot g_{MN}(v),
\end{equation}
for all $v \in V^{\otimes M}$ and $\sigma \in S_M$.

In view of the above, it is tempting to try to define limits of symmetric orbifold VAs in the following way: Define $S_{\infty}$ as the direct group limit of $S_N$ under the connecting maps $\phi_{MN}$, $S_\infty = \coprod_N S_N / \sim_{\phi_{MN}}$.
 Then $S_{\infty}$ acts on $V^{\otimes \infty}$ by 
\begin{equation}
    \sigma \cdot u = g_N(\sigma^N \cdot u^N).
\end{equation}
This definition is independent of the choice of representatives.
Then $S_{\infty}$ is indeed a group of automorphisms of $V^{\infty}$:
\begin{equation}
    \begin{split}
        \sigma \cdot \left(Y_{\infty}(u,z)v\right) & = \lim_{N \to \infty} \sigma \cdot g_N\left(Y_N(u^N,z)v^N\right) \\
        & = \lim_{N \to \infty} g_N\left(\sigma^N \cdot (Y_N(u^N,z)v^N)\right) \\
        & = \lim_{N \to \infty} g_N\left(Y_N(\sigma^N \cdot u^N,z)\sigma^N \cdot v^N\right) \\
        & = \lim_{N \to \infty} g_N\left(Y_N((\sigma \cdot u)^N,z)(\sigma^N \cdot v^N)^N\right) \\
        & = Y_{\infty}(\sigma \cdot u,z) \sigma \cdot v\ .
    \end{split}
\end{equation}
Since $S_\infty$ is an automorphism of $V^\infty$, we can now in principle consider the fixed-point VA $\left(V^{\otimes \infty}\right)^{S_{\infty}}$. However, this does not lead to an interesting result, since $\left(V^{\otimes \infty}\right)^{S_{\infty}}$ is trivial: By lemma~\ref{lem:urepresent}, any vector $v$ in $V^\infty$ will be in $Im(g_N)$ for some $N$. To be invariant under all transpositions $(M,N+1)$ with $1\leq M \leq N$, $v$ has to be the vacuum vector. For this reason we need to take a different approach to limits of permutation orbifolds.

\section{An example: Permutation orbifolds}\label{s:perm}

\subsection{Connecting maps for the permutation orbifolds}
In subsection \ref{ss:s_infty}, we constructed an action of the permutation group $S_{|I_N|}$ on the vertex algebra $V^{\otimes |I_N|}$. Let us now consider the action of a permutation group $G_N \leq S_{|I_N|}$.
Define the projector from $V^{\otimes|I_N|}$ onto $V^N := \left(V^{\otimes |I_N|}\right)^{G_N}$ as
\begin{equation}
    \pi_N = \frac{1}{|G_N|} \sum_{\sigma \in G_N} \sigma.
\end{equation}
To construct a basis of $V^N$, we start with the basis $\cF^N$ of $V^{\otimes|I_N|}$ defined in section~\ref{ss:seedtensor}. Note that $G_N$ acts on $a\in \cF^N$ by $\sigma\circ a(i)=a( \sigma^{-1}i)$. Clearly, a homogeneous basis for $V^N_{(n)}$ is then given by
\be
\Phi^N = \bigcup_{n\in\N} \Phi^N_n\ , \qquad \Phi^N_n = \pi_N (\cF^N_n)\ .
\ee
The number of elements in this basis is given by the number of orbits of functions of weight $n$ under $G_N$, which we denote by $b_n(G_N)$,
\be
|\Phi^N_n| = b_n(G_N)\ .
\ee
Next, we want to define connecting maps. For this, we introduce some notation, following \cite{Gemunden:2019hie}:
\begin{defn}\label{def:oligo}
Let $\cK \subset I_N$.
\begin{enumerate}
\item Denote by $\sstab^{\cK} := \{\sigma \in G_N|k \sigma \in \cK, \forall k \in \cK\}$ the setwise stabilizer of $\cK$. 
\item Denote by $\pstab^{\cK} := \{\sigma \in G_N|k \sigma = k, \forall k \in \cK\}$ the pointwise stabilizer of $\cK$. 
\item Let $\Grem{\cK}^N$ be the permutation group defined by the action of 
$\sstab^{\cK}/\pstab^{\cK}$ on $\cK$. Note that $\Grem{\cK}^N$ is the restriction of $G_N$ to $\cK$ in the natural sense. \label{Gremdef}
\end{enumerate}
\end{defn}
Note that $\pstab^{\cK}$ is a normal subgroup of $\sstab^{\cK}$, so that definition (\ref{Gremdef}) makes sense.

We now construct the connecting maps $f_{MN}$ recursively:
\begin{defn}\label{permconnectmap}
Define the linear maps $\bar f_{MN}: V^{\otimes |I_M|}\to V^N$ in the following way:
\be
\bar f_{NN} = \pi_N\ 
\ee
For $v\in V^{\otimes |I_N|}$ with definite support $\supp(v)=:\cK_v$,
\be\label{fbardef}
\bar f_{N,N+1}(v) = \sqrt{\frac{|G_N||\pstab^{\cK_v}|}{|G_{N+1}||\hat G_{N+1}^{\cK_{g_{N,N+1}(v)}}|}} \pi_{N+1} \circ g_{N,N+1}(v)
\ee
and $f_{MN}= f_{N-1,N}  \circ \cdots \circ f_{M+1,M+2}\circ f_{M,M+1}$ for $N>M$.
We then define the connecting maps $f_{MN}: V^M\to V^N$ as
\be
f_{MN}:= \bar f_{MN}|_{V^M}\ .
\ee
\end{defn}
As we will see, the unwieldy prefactor in (\ref{fbardef}) is necessary for the structure constants to converge. Below we will give a much nicer expression for the homogeneous components $f^{(n)}_{NM}$ in the case when $V^N_{(n)}$ is saturated. But first, we need to impose some additional conditions on the family $G_N$ to ensure that the system $(V^N,f_{MN})$ is indeed a grading-restricted direct system as in definition~\ref{Vdlimit}.

\subsection{Nested oligomorphic permutation orbifolds}

Following \cite{Gemunden:2019hie}, we make the following definition:
\begin{defn}\label{conv}
Assume $|I_N|<|I_{N+1}|$.
Let the family of permutation groups $(G_N)_{N\in\N}$ satisfy the conditions:
\begin{enumerate}
    \item The numbers $b_n(G_N)$ converge for all $n$.\label{condoligo}
    \item For every finite set $\cK\subset\N$, there is a group $\Grem{\cK}$ such that $\Grem{\cK}^N = \Grem{\cK}$ for $N$ large enough.\label{condGK}
    \item $\Grem{I_{N-1}}^N< G_{N-1}$ for all $N$.\label{condnest}
\end{enumerate}
We then call $G_N$ \emph{nested oligomorphic}.
\end{defn}

\begin{prop}\label{permGRDS}
Let $V$ be a seed VA as in (\ref{CFTtype}),
$G_N$ a nested oligomorphic family of permutation groups, $V^N = \left(V^{\otimes |I_N|}\right)^{G_N}$ and $f_{MN}$ as in definition~\ref{permconnectmap}. Then $(V^N,f_{MN})$ is a grading-restricted direct system. 
\end{prop}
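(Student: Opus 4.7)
\emph{Plan.} The proposition asks us to verify the six properties of a grading-restricted direct system: injectivity of the $f_{MN}$ together with items (1)--(5) of Definition~\ref{Vdlimit}. The first three and item (5) are essentially immediate from the construction in Definition~\ref{permconnectmap}; the substantive work is in item (4) and in injectivity, both of which draw on the nested oligomorphic hypothesis.

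\emph{The easy conditions.} Condition (1) is built into the recursive definition $f_{MN}=f_{N-1,N}\circ\cdots\circ f_{M,M+1}$, which automatically satisfies the cocycle identity. Condition (2) holds because $f_{NN}=\pi_N|_{V^N}$ and $\pi_N$ acts as the identity on the $G_N$-invariant subspace $V^N$. In (3), grading preservation follows because $g_{MN}$ appends only weight-zero vacuum tensor factors and $\pi_N$ is a sum of grading-preserving permutations, while vacuum preservation comes down to evaluating the prefactor of (\ref{fbardef}) at $\cK_v=\emptyset$. Condition (5) holds with $\bar n=0$ because the CFT-type assumption (\ref{CFTtype}) forces $V^{\otimes|I_N|}$, and therefore its $G_N$-invariant subspace $V^N$, to vanish in negative weights.

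\emph{Dimension stabilization (4).} The basis $\Phi^N_n=\pi_N(\cF^N_n)$ is indexed by $G_N$-orbits on weight-$n$ functions $I_N\to\Psi$, so $\dim V^N_{(n)}=b_n(G_N)$. Clause (\ref{condoligo}) of Definition~\ref{conv} says $b_n(G_N)$ converges as $N\to\infty$; being a nonnegative integer sequence, it must be eventually constant, which is exactly condition (4).

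\emph{Injectivity.} By the composition law it suffices to show each $f_{M,M+1}$ is injective. The identity $\pi_{M+1}\pi_M=\pi_{M+1}$, which follows from $\phi_{M,M+1}(G_M)\subset G_{M+1}$, implies that $f_{M,M+1}$ sends the orbit-basis vector $\pi_M(a)$ to a nonzero scalar multiple of $\pi_{M+1}(g_{M,M+1}(a))$. So injectivity reduces to showing that if $a,a'\in\cF^M$ lie in different $G_M$-orbits, then the extensions $g(a),g(a')\in\cF^{M+1}$ lie in different $G_{M+1}$-orbits. Suppose $\tau\in G_{M+1}$ satisfies $\tau\cdot g(a)=g(a')$; then $\tau(\cK_a)=\cK_{a'}$ with both supports contained in $I_M$, and after adjusting $\tau$ by a suitable element of the pointwise stabilizer $\hat G_{M+1}^{\cK_{a'}}$ one can arrange $\tau$ to setwise-stabilize $I_M$ without changing its action on $\cK_a$. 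Its restriction to $I_M$ then represents an element of $\Grem{I_M}^{M+1}$, which by the nesting clause (\ref{condnest}) lies in $G_M$ and witnesses the $G_M$-equivalence of $a$ and $a'$, contradicting our assumption. The main obstacle I anticipate is this last modification step -- a coset/extension argument using clauses (\ref{condGK}) and (\ref{condnest}) of Definition~\ref{conv} in combination, essentially carried out already in \cite{Gemunden:2019hie}. Once injectivity is in hand, combining it with (4) shows that the homogeneous components $f^{(n)}_{MN}$ are bijective on saturated degrees for sufficiently large $N$, completing the verification.
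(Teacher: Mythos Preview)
Your overall structure mirrors the paper's proof: (1)--(3) and (5) are immediate from the construction, (4) comes from convergence of the integer sequence $b_n(G_N)$, and injectivity is reduced to the orbit-separation claim that distinct $G_M$-orbits in $\cF^M$ map under $g_{M,M+1}$ to distinct $G_{M+1}$-orbits, which the paper likewise attributes (tersely) to the nesting condition~(\ref{condnest}).

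There is one genuine slip. The containment $\phi_{M,M+1}(G_M)\subset G_{M+1}$ is \emph{not} part of the nested oligomorphic hypothesis and can fail: nesting says only that elements of $G_{M+1}$ which setwise stabilize $I_M$ restrict to elements of $G_M$, not that elements of $G_M$ extend to elements of $G_{M+1}$. (For instance, $G_M$ trivial on $I_M=\{1,2\}$ and $G_{M+1}=\langle(123)\rangle$ on $\{1,2,3\}$ satisfies nesting, yet $\phi_{M,M+1}(S_2)\not\subset G_{M+1}$.) Consequently your identity $\pi_{M+1}\pi_M=\pi_{M+1}$, i.e.\ $\pi_{M+1}\circ g_{M,M+1}\circ\pi_M=\pi_{M+1}\circ g_{M,M+1}$, need not hold in general---indeed the paper only proves it under saturation in Lemma~\ref{satpro}---and $f_{M,M+1}(\pi_M(a))$ need not be a scalar multiple of a single orbit vector $\pi_{M+1}(g(a))$; it is in general a positive combination of several $\Phi^{M+1}$-basis vectors, one for each $G_{M+1}$-orbit meeting $\{g_{M,M+1}(\sigma\cdot a):\sigma\in G_M\}$.

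The fix is to bypass that intermediate claim entirely. Once you have orbit separation (same $G_{M+1}$-orbit $\Rightarrow$ same $G_M$-orbit), the $\Phi^{M+1}$-basis vectors appearing in $f_{M,M+1}(\pi_M(a))$ and in $f_{M,M+1}(\pi_M(a'))$ have disjoint support whenever $a,a'$ lie in distinct $G_M$-orbits, so the images are linearly independent and $f_{M,M+1}$ is injective. This is exactly the one-line deduction the paper makes. Your caution about the ``modification step'' in the orbit-separation argument itself is well placed; the paper does not spell it out either and, like you, effectively defers to \cite{Gemunden:2019hie}.
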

\begin{proof}
(1)--(3) of definition~\ref{Vdlimit} follow immediately by construction of the $f_{MN}$, and (5) from the form of $V^N$.
The nesting condition (\ref{condnest}) in definition~\ref{conv} implies that if two elements of $V^{\otimes|I_{N-1}|}$ are in different orbits under $G_{N-1}$, then they are also in different orbits as elements of $V^{\otimes|I_N|}$ under $G_N$; hence $f_{N-1,N}$ is injective.
Finally, (4) follows from (\ref{condoligo}) in definition~\ref{conv}.
\end{proof}

\begin{lem}\label{satpro}
Fix $n$. If $V^M_{(n)}$ is saturated at $M$ and $N\geq M$, then
\be
\pi_N \circ g^{(n)}_{MN}\circ \pi_M = \pi_N \circ g^{(n)}_{MN}\ .
\ee
\end{lem}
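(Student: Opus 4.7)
The plan is to reduce the claimed identity to the single-step case $N=M+1$ via induction on $N-M$, and then to resolve the single step by a combinatorial orbit-counting argument that combines the injectivity of $f_{M,M+1}$ (from the nesting condition, cf.\ Proposition~\ref{permGRDS}) with the dimension equality $b_n(G_M)=b_n(G_{M+1})$ coming from saturation. The inductive reduction itself is straightforward: since saturation of $V^M_{(n)}$ carries over to every intermediate $V^{N'}_{(n)}$ with $M\leq N'\leq N$, factoring $g^{(n)}_{MN}=g^{(n)}_{N-1,N}\circ g^{(n)}_{M,N-1}$, inserting $\pi_{N-1}$ between the two factors via the single-step identity at $(N-1,N)$, and then invoking the inductive hypothesis at $(M,N-1)$ yields the general case.

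For the single step, I work with the homogeneous basis $\cF^M_n$ of $V^{\otimes|I_M|}_{(n)}$. The $G_M$-orbits partition $\cF^M_n$ into $b_n(G_M)$ blocks $\{P_j\}$, the $G_{M+1}$-orbits partition $\cF^{M+1}_n$ into $b_n(G_{M+1})$ blocks $\{O_i\}$, and $g := g_{M,M+1}$ identifies $\cF^M_n$ with the subset $\cF_{I_M}\subseteq\cF^{M+1}_n$ of weight-$n$ functions supported in $I_M$. This produces two partitions of $\cF_{I_M}$: the image partition $\{g(P_j)\}$ and the intersection partition $\{O_i\cap\cF_{I_M}\}$. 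Injectivity of $f_{M,M+1}$ says exactly that each nonempty block $O_i\cap\cF_{I_M}$ lies inside a single $g(P_j)$, i.e., the intersection partition refines the image partition. Hence the number of nonempty intersection blocks is at least $b_n(G_M)$ and at most $b_n(G_{M+1})$; saturation forces equality, and the two partitions of $\cF_{I_M}$ must coincide. Consequently each $P_j$ is associated with a unique $G_{M+1}$-orbit $O_{i(j)}$ satisfying $g(P_j)=O_{i(j)}\cap\cF_{I_M}$, so $\pi_{M+1}(g(a)) = |O_{i(j)}|^{-1}\sum_{b\in O_{i(j)}} b$ depends only on the $G_M$-orbit $P_j$ that contains $a$.

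The lemma then follows easily. Expand $w\in V^{\otimes|I_M|}_{(n)}$ in the basis as $w=\sum_{a\in\cF^M_n} c_a\, a$; linear independence of the $G_M$-orbit sums gives $\pi_M(w)=0$ iff $\sum_{a\in P_j} c_a=0$ for every $j$, and the previous paragraph yields
\[
\pi_{M+1}(g(w))=\sum_j \Big(\sum_{a\in P_j} c_a\Big)\, \pi_{M+1}(g(a_{P_j})).
\]
Hence $\pi_M(w)=0$ implies $\pi_{M+1}(g(w))=0$, which applied to $w=v-\pi_M(v)$ gives the single-step identity. The main obstacle in this argument is precisely the well-definedness of the orbit correspondence $P_j\mapsto O_{i(j)}$: since $G_M$ need not embed naturally into $G_{M+1}$, different representatives of a single $G_M$-orbit could a priori extend under $g$ into different $G_{M+1}$-orbits, and it is exactly the saturation hypothesis that, via the refinement-plus-count above, enforces this combinatorial rigidity.
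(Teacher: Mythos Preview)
Your proof is correct and rests on the same key observation as the paper's: nesting forces distinct $G_M$-orbits to land in distinct $G_N$-orbits, and saturation then upgrades this injection to a bijection, so that $\pi_N\circ g_{MN}$ is constant on $G_M$-orbits. The only notable difference is that your induction on $N-M$ is unnecessary---the paper runs the identical orbit-bijection argument directly for arbitrary $N\geq M$ (using iterated nesting), then phrases the conclusion element-wise by producing, for each $\sigma\in G_M$, a $\tau_\sigma\in G_N$ with $\tau_\sigma\cdot g_{MN}(v)=g_{MN}(\sigma\cdot v)$, which is just another way of saying what your partition coincidence establishes.
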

\begin{proof}
Saturated means that $b_n(G_M)=b_n(G_N)$. As in the proof of proposition~\ref{permGRDS}, the (iterated) nesting condition (\ref{condnest}) implies that different $G_M$ orbits are in different $G_N$ orbits, which together with saturation implies that the $G_M$ orbits are in one-to-one correspondence to $G_N$ orbits. This means that if we pick a representative $v$ of a $G_M$ orbit, then for any $\sigma \in G_M$, we can find $\tau_\sigma \in G_N$ such that $\tau_\sigma$ acts on $g_{MN}(v)$ as $\phi_{MN}(\sigma)\in S_N$,
\be
\tau_\sigma\circ g_{MN}(v) = \phi_{MN}(\sigma)\circ g_{MN}(v)\ . 
\ee 
Using this, we can write 
\begin{align}
    \pi_N \circ g^{(n)}_{MN}\circ \pi_M(v) & = \frac{1}{|G_M|} \sum_{\sigma \in G_M} \pi_N \circ g^{(n)}_{MN} \circ \sigma(v)  \\
    & = \frac{1}{|G_M|} \sum_{\sigma \in G_M} \pi_N\circ \phi_{MN}(\sigma)\circ g^{(n)}_{MN}(v)  \\
    & = \frac{1}{|G_M|} \sum_{\sigma \in G_M} \pi_N\circ \tau_{\sigma}\circ g^{(n)}_{MN}(v)  \\
    & = \pi_N \circ g^{(n)}_{MN}(v)\ .
\end{align}
\end{proof}
The lemma immediately implies
\begin{cor}
Fix $n$. If $V^N_{(n)}$ is saturated at $N$ and $M\geq N$, then
\be
f^{(n)}_{NM}= \sqrt{\frac{|G_N||\pstab^{\cK_v}|}{|G_{M}||\hat G_{M}^{\cK_{g_{NM}(v)}}|}} \pi_M \circ g^{(n)}_{NM}\ .
\ee
\end{cor}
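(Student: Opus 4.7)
The plan is to prove the formula by induction on $M-N \geq 0$, using the lemma above as the main tool together with a small observation about how pointwise stabilizers behave on a $G$-orbit of supports. The base case $M=N$ is immediate: both square-root arguments equal $|G_N|\,|\hat G_N^{\cK_v}|$ so the prefactor is $1$, while $g_{NN}$ is the identity and $\pi_N = f^{(n)}_{NN}$ by definition.

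For the inductive step, assume the formula for $f^{(n)}_{N,M-1}$, and use the factorization $f^{(n)}_{NM}=f^{(n)}_{M-1,M}\circ f^{(n)}_{N,M-1}$. For $v\in V^N_{(n)}$ of definite support $\cK_v$, the inductive hypothesis gives
\be
f^{(n)}_{N,M-1}(v) = \sqrt{\frac{|G_N|\,|\hat G_N^{\cK_v}|}{|G_{M-1}|\,|\hat G_{M-1}^{\cK_v}|}}\,\pi_{M-1}\,g^{(n)}_{N,M-1}(v).
\ee
Expand $\pi_{M-1}g^{(n)}_{N,M-1}(v)$ as a $G_{M-1}$-average of translates; each translate has a definite support lying in the $G_{M-1}$-orbit of $\cK_v$, and the pointwise stabilizers in $G_M$ of all such supports are conjugate subgroups and hence have the same order. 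Therefore the prefactor appearing in the definition of $\bar f_{M-1,M}$ takes the same value on every translate and factors out of the sum. Applying this produces $f^{(n)}_{NM}(v)$ as the product of the two square-root prefactors times $\pi_M\circ g^{(n)}_{M-1,M}\circ\pi_{M-1}\circ g^{(n)}_{N,M-1}(v)$. Because saturation of $V^N_{(n)}$ at $N$ propagates to every $N'\geq N$, the lemma applies at level $M-1$ and gives $\pi_M\circ g^{(n)}_{M-1,M}\circ\pi_{M-1}=\pi_M\circ g^{(n)}_{M-1,M}$. Combined with $g^{(n)}_{M-1,M}\circ g^{(n)}_{N,M-1}=g^{(n)}_{NM}$, the telescoping cancellation of the $|G_{M-1}|\,|\hat G_{M-1}^{\cK_v}|$ factors, and the fact that $\cK_{g_{NM}(v)}=\cK_v$ (since $g_{NM}$ only tensors with vacua), this yields the claimed formula on definite-support vectors. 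Extending by linearity to all of $V^N_{(n)}$ closes the induction.

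The main subtlety is precisely the support-dependence of the prefactor in $\bar f_{M-1,M}$: after a single application of $\bar f_{N,M-1}$, the intermediate vector $\pi_{M-1}g^{(n)}_{N,M-1}(v)$ is no longer of definite support but is instead supported on an entire $G_{M-1}$-orbit of subsets of $I_{M-1}$. The observation that pointwise stabilizers of conjugate subsets are conjugate subgroups (and hence have equal order) is exactly what allows the prefactor to be pulled out unchanged. Once this is in hand, the rest is routine telescoping plus a single invocation of the lemma, with saturation providing the hypothesis needed to invoke it.
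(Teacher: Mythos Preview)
Your argument follows the same route as the paper's (induction in $M$ using Lemma~\ref{satpro}), and the telescoping of prefactors together with the single application of the lemma to remove the intermediate $\pi_{M-1}$ is exactly right. The paper's proof is the one-line ``use Lemma~\ref{satpro} and induction in $M$'', so you have in fact supplied the details the paper omits.

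There is one step you should tighten. When you pull the $\bar f_{M-1,M}$ prefactor out of the sum over the $G_{M-1}$-orbit, you assert that the pointwise stabilizers in $G_M$ of all supports $\sigma\cK_v$ (for $\sigma\in G_{M-1}$) are conjugate. For the $G_{M-1}$-stabilizers this is immediate, since $\hat G_{M-1}^{\sigma\cK_v}=\sigma\,\hat G_{M-1}^{\cK_v}\,\sigma^{-1}$. But for the $G_M$-stabilizers this is \emph{not} a general fact: $G_{M-1}$ need not sit inside $G_M$, and without further input one can cook up examples where $|\hat G_M^{\sigma\cK_v}|\neq|\hat G_M^{\cK_v}|$. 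What saves you is precisely saturation at level $M-1$: as in the proof of Lemma~\ref{satpro}, nesting together with $b_n(G_{M-1})=b_n(G_M)$ forces the entire $G_{M-1}$-orbit of $g_{N,M-1}(v)$ to land (via $g_{M-1,M}$) inside a single $G_M$-orbit, so each $\sigma\cK_v$ equals $\tau\cK_v$ for some $\tau\in G_M$, and then conjugacy in $G_M$ follows. You already invoke saturation a few lines later to drop $\pi_{M-1}$; you should invoke it here as well to justify the constancy of $|\hat G_M^{\sigma\cK_v}|$.
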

\begin{proof}
Use lemma~\ref{satpro} and induction in $M$.
\end{proof}

\subsection{Structure constants}\label{ss:permstruc}
Now let us investigate convergence of the structure constants for nested oligomorphic permutation orbifolds. 
Let $a^N,b^N,c^N \in \cF^N$ be basis vectors of $V^{\otimes|I_N|}$.
By the remarks above, $\cF^N$ forms a basis  of $V^{\otimes|I_N|}$. The structure constants of the tensor VA $V^{\otimes|I_N|}$ are given by
\be\label{Cg1g2g3}
c^N(a^N,b^N,c^N):= \prod_{i=1}^{|I_N|}c(a^N(i),b^N(i),c^N(i))
\ee
where for all $i \in I_N$, $a^N(i),b^N(i),c^N(i)\in \Psi$ are basis elements of the seed VA $V$, and 
\be
c(a^N(i),b^N(i),c^N(i)) 
\ee
are the structure constants of the seed VA $V$.
By the remarks above, $\Phi^N_n = \pi_N (\cF^N_n)$ is a basis for $V^N_{(n)}$. We  choose a basis $\Phi_n$ of $V^\infty_{(n)}$ by picking $M$ large enough so that $V^M_{(n)}$ is saturated, and then taking
\be
\Phi_n := f_M(\Phi^N_n)\ , 
\ee
from which we obtain a homogeneous basis $\Phi$ of $V^\infty$, $\Phi = \bigcup_{n} \Phi_n$.
Now let $a,b,c\in \Phi$. We want to compute the structure constant
\be
C^\infty_{abc}= \lim_{N\to\infty} C^N_{a^Nb^Nc^N}
\ .
\ee
To do this, first write $a^N= f_{NM}(a^M)$. By lemma~\ref{satpro}, for simplicity we can actually choose $a^M \in \cF^M$, that is as a representative of the orbit $\pi_M a^M$. We then have
\begin{multline}\label{3ptsum}
C^N_{a^Nb^Nc^N}
= C^N_{f_{MN}(a^M)f_{MN}(b^M)f_{MN}(c^M)}\\
=\left(\frac{|G_M|^3|\hat G_M^{\cK_{a}}||\hat G_M^{\cK_{b}}||\hat G_M^{\cK_{c}}|}{|G_N|^3|\pstab^{\cK_{a}}||\pstab^{\cK_{b}}||\pstab^{\cK_{c}}|}\right)^{1/2}
\sum_{\vec\sigma \in G_N^{\times 3}} c^N(\sigma_1 g_{MN}(a^M), \sigma_2 g_{MN}(b^M), \sigma_3 g_{MN}(c^M))\ ,
\end{multline}
where $\vec\sigma = (\sigma_1, \sigma_2, \sigma_3)$.
To investigate the limit of  $C^N_{a^Nb^Nc^N}$, we use theorem~2.5 in \cite{Gemunden:2019hie} to rewrite it in a form that makes the $N$ dependence manifest. We can do to this because (\ref{3ptsum}) is essentially their equation (28), the only difference being a prefactor that depends on $M$, but not on $N$, and therefore does not affect convergence as $N\to \infty$.

To do this, let us first introduce some notation. We will write $\cK_1,\cK_2,\cK_3$ for $\cK_a,\cK_b,\cK_c$. We write $\cK_{ij}=\cK_i \cup \cK_j$ and $\cK_{123}=\cK_1\cup\cK_2\cup\cK_3$. Finally we define  the \emph{triple overlap set} $\cK_t = \cK_1 \cap \cK_2\cap \cK_3$ and the \emph{one-point set} $\cK_o = \cK_{123}- \left((\cK_1\cap \cK_2) \cup (\cK_1\cap \cK_3) \cup (\cK_2\cap \cK_3)\right)$. Next we observe that 
\be
c^N(a^N,b^N,c^N)= 0 \qquad \textrm{unless}\qquad \cK_o=\emptyset\ .
\ee
This follows from the fact that $c(a,b,c)=0$ if exactly two of the arguments are in $\C \vac$, as discussed around (\ref{1ptfctvanish}).

Theorem~2.5 of \cite{Gemunden:2019hie} then gives the following expression for the structure constants:
\begin{multline}\label{eq:oligo}
C^N_{a^Nb^Nc^N}
= \left(|G_M|^3|\hat G_M^{\cK_{1}}||\hat G_M^{\cK_{2}}||\hat G_M^{\cK_{3}}|\right)^{1/2}\times \\
\sum_{[\kappa]\in \cS}  M(\kappa\cK,N) \sum_{[\sigma] \in \times_i \Grem{\cK_i}} c^N(\kappa_1\sigma_1 g_{MN}(a^M),\kappa_2\sigma_2 g_{MN}(b^M), \kappa_3\sigma_3 g_{MN}(c^M))\ ,
\end{multline}
where $\sigma = (\sigma_1, \sigma_2, \sigma_3) \in G_N^{\times 3}$, $\kappa = (\kappa_1, \kappa_2, \kappa_3) \in G_N^{\times 3}$ and
\be
\cS= G_N^{\diag} \backslash G_N\times G_N \times G_N/ \sstab^{\cK_1}\times \sstab^{\cK_2} \times \sstab^{\cK_3}
\ee
as a set, where $G_N^{\diag}$ is the diagonal subgroup of $G_N\times G_N\times G_N$. Finally, $\kappa\cK=(\kappa_1\cK_a,\kappa_2\cK_b,\kappa_3\cK_c)$ and
\be\label{MGLmq}
M(\cK,N)  =\left\{ \begin{array}{cc}
\left(\frac{|\pstab^{\cK_1}||\pstab^{\cK_2}||\pstab^{\cK_3}|}{|G_N||\pstab^{\cK_1\cup\cK_2\cup\cK_3}|^2}\right)^{1/2} & \cK_o =\emptyset\\
0& \textrm{else}
\end{array}\right.
\ee
The crucial observation here is that in (\ref{eq:oligo}), only $M(\kappa\cK,N)$ depends on $N$: For $N$ large enough, we can find an $N$-independent representative for $[\kappa]$ and $[\sigma]$. With this choice, the arguments of $c^N$ have $N$-independent support, so that the structure constant $c^N$ of the tensor product VAs does actually  not depend on $N$. For a more detailed explanation of this, see \cite{Gemunden:2019hie}.

\subsection{The VA limit of permutation orbifolds}
From proposition~\ref{permGRDS} and the results in section~\ref{ss:permstruc}, we conclude that the existence of the VA-limit of nested oligomorphic permutation orbifolds only depends on the behavior of $M(\kappa\cK,N)$:
\begin{cor}
Let $V$ be a grading-restricted VA as in (\ref{CFTtype}), and $G_N$ be a nested oligomorphic family. Then the grading-restricted direct system $(V^N,f_{MN})$ defined as in proposition~\ref{permGRDS} has a VA-limit if the $M(\cK,N)$ converge as $N\to \infty$ for all $\cK_1,\cK_2,\cK_3$. 
\end{cor}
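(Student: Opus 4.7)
The plan is to apply theorem~\ref{thm:mainthm} to the system $(V^N, f_{MN})$. By proposition~\ref{permGRDS}, this system is already a grading-restricted direct system, so the only thing left to verify is that for each triple $a,b,c$ in the homogeneous basis $\Phi$ of $V^\infty$ the structure constants $C^N_{a^N b^N c^N}$ converge as $N\to\infty$; then theorem~\ref{thm:mainthm} immediately delivers the VA-limit.

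To obtain this, I would work directly with the explicit expansion (\ref{eq:oligo}). First I would observe that the overall prefactor $(|G_M|^3 |\hat G_M^{\cK_1}| |\hat G_M^{\cK_2}| |\hat G_M^{\cK_3}|)^{1/2}$ depends only on the fixed level $M$ at which representatives $a^M,b^M,c^M$ are chosen, and is therefore independent of $N$. The nested oligomorphic condition (\ref{condGK}) guarantees that the groups $\Grem{\cK_i}^N$ stabilize to $\Grem{\cK_i}$ for $N$ large, so the inner sum over $[\sigma] \in \times_i \Grem{\cK_i}$ has an $N$-independent index set. Once $N$ exceeds the size of the combined support $\cK_{123}$, representatives of $[\kappa]$ and $[\sigma]$ can be chosen so that each argument $\kappa_j \sigma_j g_{MN}(a^M)$ has $N$-independent support, and since $c^N$ is the tensor-product structure constant (\ref{Cg1g2g3}), the factors outside that support contribute only $c(\vac,\vac,\vac)=1$. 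Hence each inner summand stabilizes to a fixed value independent of $N$.

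The only genuinely $N$-dependent ingredients are then $M(\kappa\cK,N)$ and the indexing set $\cS$. The main obstacle, and the step I would focus on, is showing that the outer sum $\sum_{[\kappa]\in \cS}$ is effectively finite with an $N$-stable set of nonzero contributions. Because $M(\kappa\cK,N)=0$ unless the one-point set $\cK_o$ of $(\kappa_1\cK_1,\kappa_2\cK_2,\kappa_3\cK_3)$ is empty, every contributing double coset corresponds to a configuration in which each element of the combined support lies in at least two of the $\kappa_j\cK_j$; in particular $|\kappa_1\cK_1\cup\kappa_2\cK_2\cup\kappa_3\cK_3|$ is bounded by the fixed quantity $|\cK_1|+|\cK_2|+|\cK_3|$. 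The oligomorphic conditions (\ref{condoligo}) and (\ref{condGK}) together then imply that the number of $G_N$-orbits of such triples of subsets of uniformly bounded size stabilizes for $N$ large to a finite value, so the set of contributing double cosets $[\kappa]$ can be represented by a finite, $N$-independent list. With the sum finite and $N$-stable in shape, we can interchange the limit with the summation; the hypothesis that $M(\kappa\cK,N)$ converges for every $\cK_1,\cK_2,\cK_3$ then yields convergence of $C^N_{a^N b^N c^N}$, and theorem~\ref{thm:mainthm} finishes the proof.
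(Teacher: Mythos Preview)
Your proposal is correct and follows essentially the same route as the paper: invoke proposition~\ref{permGRDS} to get the grading-restricted direct system, use the expansion (\ref{eq:oligo}) and the observation at the end of section~\ref{ss:permstruc} that only $M(\kappa\cK,N)$ depends on $N$, and then apply theorem~\ref{thm:mainthm}. The paper does not spell out a separate proof for this corollary; it simply states that it follows ``from proposition~\ref{permGRDS} and the results in section~\ref{ss:permstruc}'', deferring the finiteness and $N$-stability of the sum over $[\kappa]\in\cS$ to \cite{Gemunden:2019hie}, whereas you sketch that argument directly via the oligomorphic conditions.
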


The following lemma shows that the $M(\cK,N)$ are actually bounded:
\begin{lem}
\be
0\leq M(\cK,N) \leq 1\ .
\ee
\end{lem}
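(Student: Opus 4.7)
The plan is as follows. The lower bound $M \geq 0$ is immediate, and the claim is trivial when $\cK_o \neq \emptyset$, so I concentrate on the case $\cK_o = \emptyset$. Write $H_i := \pstab^{\cK_i}$ and $H := \pstab^{\cK_1 \cup \cK_2 \cup \cK_3}$, so that the target inequality becomes
\[
|H_1|\,|H_2|\,|H_3| \;\leq\; |G_N|\,|H|^2.
\]
The strategy is to control the fibers of the product map $\mu : H_1 \times H_2 \times H_3 \to G_N$, $(h_1,h_2,h_3) \mapsto h_1 h_2 h_3$: if $|\mu^{-1}(g)| \leq |H|^2$ for every $g \in G_N$, then
\[
|H_1|\,|H_2|\,|H_3| \;=\; \sum_{g \in G_N} |\mu^{-1}(g)| \;\leq\; |G_N|\,|H|^2.
\]

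The first ingredient is that $\cK_o = \emptyset$ forces $\cK_i \cup \cK_j = \cK_1 \cup \cK_2 \cup \cK_3$ for every pair, because each point of $\cK_{123}$ lies in at least two of the $\cK_i$; taking pointwise stabilizers gives $H_i \cap H_j = \pstab^{\cK_i \cup \cK_j} = H$ for all $i \neq j$. Standard fiber counting for the map $(h_1,h_2) \mapsto h_1 h_2$ then yields
\[
|\mu^{-1}(g)| \;=\; |H| \cdot |H_3 \cap H_2 H_1 g|,
\]
since for each $h_3$ with $g h_3^{-1} \in H_1 H_2$ there are exactly $|H_1 \cap H_2| = |H|$ factorizations $h_1 h_2 = g h_3^{-1}$, and the condition on $h_3$ translates into $h_3 \in H_2 H_1 g$. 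It therefore suffices to show $|H_3 \cap H_2 H_1 g| \leq |H|$ for every $g$. If $x,y$ both lie in $H_3 \cap H_2 H_1 g$, then $xy^{-1} \in H_3 \cap H_2 H_1 H_2$, so the bound follows from the structural identity $H_3 \cap H_2 H_1 H_2 = H$, which forces $H_3 \cap H_2 H_1 g$ to lie in a single right $H$-coset.

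The main obstacle, and the place where $\cK_o = \emptyset$ is genuinely used, is this identity $H_3 \cap H_2 H_1 H_2 = H$. My plan is to take $\alpha = h_2 h_1 h_2'' \in H_3$ with $h_2, h_2'' \in H_2$ and $h_1 \in H_1$, and to examine $\alpha(k)$ for $k \in A_{23} := (\cK_2 \cap \cK_3) \setminus \cK_1$: since both $h_2$ and $h_2''$ fix $k \in \cK_2$, the equation $\alpha(k)=k$ collapses to $h_1(k) = k$. Hence $h_1$ fixes $\cK_1 \cup A_{23}$ pointwise. Here $\cK_o = \emptyset$ is decisive: any point outside $\cK_1$ must belong to at least two of the $\cK_i$, hence to both $\cK_2$ and $\cK_3$, so $\cK_{123} \setminus \cK_1 \subseteq A_{23}$ and therefore $\cK_1 \cup A_{23} = \cK_{123}$. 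Thus $h_1 \in H$, and since $H \subseteq H_2$ we get $\alpha \in H_2 H H_2 = H_2$; combined with $\alpha \in H_3$ this yields $\alpha \in H_2 \cap H_3 = H$, closing the argument.
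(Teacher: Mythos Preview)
Your proof is correct. Both your argument and the paper's hinge on the same combinatorial consequence of $\cK_o=\emptyset$, namely that certain pairwise unions already equal $\cK_{123}$, which forces the relevant intersections of pointwise stabilizers to collapse to $H=\pstab^{\cK_{123}}$. The packaging differs: you bound the fibers of the full multiplication map $H_1\times H_2\times H_3\to G_N$ by $|H|^2$ via the identity $H_3\cap H_2H_1H_2=H$, whereas the paper lets $G_3$ act by right translation on the set $G_1G_2$, shows the stabilizer is exactly $G_{123}$ (using that $\tilde g_3 g_3^{-1}=\tilde g_2^{-1}\tilde g_1^{-1}g_1g_2$ fixes $(\cK_1\cap\cK_2)\cup\cK_3=\cK_{123}$), and applies orbit--stabilizer. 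The paper thereby obtains the \emph{exact} formula
\[
M(\cK,N)=\left(\frac{|\pstab^{\cK_1}\pstab^{\cK_2}\pstab^{\cK_3}|}{|G_N|}\right)^{1/2},
\]
which is reused later in the factorization analysis. Your fiber bound actually yields the same identity once one notes that each nonempty fiber has size at least $|H|^2$ as well (since $Hh_3\subset H_3\cap H_2H_1g$ whenever $h_3$ lies in that set), so nothing is lost, but it would be worth stating the equality rather than only the inequality.
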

\begin{proof}
Note that $\hat G^{A\cup B} = \hat G^A \cap \hat G^B$. Denote $G_i = \hat G^{\cK_i}_N$ and $G_{ij}= G_i\cap G_j$ for $i\neq j$ etc. The inequality is trivially satisfied if $\cK_o\neq\emptyset$. If $\cK_o=\emptyset$, then $\cK_1\cup\cK_2\cup\cK_3 = \cK_1\cup\cK_2$ etc., so that $G_{ij}=G_{123}$. We claim that 
\be
|G_1G_2G_3|= \frac{|G_1||G_2||G_3|}{|G_{123}|^2}\ .
\ee
To see this, note that by the usual argument we have $|G_1G_2|=|G_1||G_2|/|G_{12}|$. Next consider the orbit of the set $G_1G_2$ under the right action of $G_3$. The stabilizer subgroup under this action is $G_{123}$: On the one hand, because $G_{123}< G_2$, $G_1G_2g_3 = G_1G_2$ if $g_3\in G_{123}$. On the other hand, if $g_1g_2g_3=\tilde g_1\tilde g_2\tilde g_3$, $\tilde g_3 g_3^{-1}=\tilde g_2^{-1}\tilde g_1^{-1}g_1 g_2$, so that $\tilde g_3 g_3^{-1}$ stabilizes $\cK_1\cap \cK_2$ pointwise; clearly it also stabilizes $\cK_3$ pointwise. But because $\cK_o=\emptyset$, we have $(\cK_1\cap \cK_2) \cup \cK_3=\cK_1\cup\cK_2\cup\cK_3$, so that $\tilde g_3 g_3^{-1}\in G_{123}$. The orbit stabilizer theorem then implies that the orbit has length $|G_3|/|G_{123}|$, from which it follows that $|G_1G_2G_3|=|G_1G_2||G_3|/|G_{123}|=|G_1||G_2||G_3|/|G_{123}|^2$ as claimed. Plugging this into (\ref{MGLmq}) gives
\be\label{MK2}
M(\cK,N) = \left(\frac{|\pstab^{\cK_1}\pstab^{\cK_2}\pstab^{\cK_3}|}{|G_N|}\right)^{1/2}\ ,
\ee
from which the claim follows since the numerator is a subset of the group in the denominator.
\end{proof}
Because the $M(\cK,N)$ are bounded, it is possible to find a VA limit of $V^N$ by picking a convergent subsequence of $V^N$. More precisely, since the basis $\Phi$ is a countable set, so is the set of structure constants $C_{abc}$ with $a,b,c\in \Phi$. We can thus order them, and then, for the first structure constant, pick an infinite subsequence of $\N$ for which all necessary $M(\cK,N)$ converge, giving a limit $C^\infty_{abc}$ for this structure constant. We can apply this procedure recursively to all triples of basis vectors: in the $k$-th step, we keep the first $k$ terms of the ($k$-1)-th subsequence, and then pick an infinite subsequence of the remaining terms for which the $k$-th structure constant  converges. In total this gives a subsequence of $V^N$ for which all structure constants converge, automatically satisfy Borcherds' identity and hence define a state-field map $Y_\infty$. In summary:

\begin{thm}\label{thmoligo}
Let $V$ be a grading-restricted VA as in (\ref{CFTtype}), and $(G_N)_{N\in\N}$ be a nested oligomorphic family of permutation groups. Then we can find a grading-restricted VA $V^\infty$ that is a limit of an appropriate subsequence of the system $(V^N,f_{MN})$ of permutation orbifolds. 
\end{thm}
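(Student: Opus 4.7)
The plan is to combine the infrastructure already developed in this section: Proposition~\ref{permGRDS} gives the grading-restricted direct system, the explicit formula (\ref{eq:oligo}) exhibits the structure constants, the boundedness lemma controls the only $N$-dependent factor $M(\kappa\cK,N)$, and Theorem~\ref{thm:mainthm} packages everything once convergence is secured. The only thing genuinely to be done is to upgrade ``bounded structure constants'' to ``structure constants that converge along some common subsequence,'' which is a diagonal extraction.

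First I would observe that, by Proposition~\ref{permGRDS}, $(V^N,f_{MN})$ is already a grading-restricted direct system, so by Theorem~\ref{thm:mainthm} it suffices to produce a subsequence $(N_j)$ along which every structure constant $C^{N_j}_{abc}$ with $a,b,c\in\Phi$ admits a limit. Fix such a triple and choose $M$ large enough that $a,b,c$ have representatives $a^M,b^M,c^M\in\cF^M$ with supports $\cK_1,\cK_2,\cK_3$. Formula (\ref{eq:oligo}) then expresses $C^N_{a^Nb^Nc^N}$ as an $M$-dependent prefactor times a sum over $[\kappa]\in\cS$ and $[\sigma]\in\times_i\Grem{\cK_i}$. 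By the oligomorphic and nesting conditions of Definition~\ref{conv}, the number of relevant double cosets $[\kappa]$ stabilises for $N$ sufficiently large (one chooses $N$-independent representatives), and the factor $c^N(\kappa_1\sigma_1 g_{MN}(a^M),\ldots)$ then coincides with a fixed structure constant of $V$ evaluated on an $N$-independent support. Hence, for each fixed triple, $C^N_{a^Nb^Nc^N}$ reduces to a finite $\C$-linear combination of the numbers $M(\kappa\cK,N)$, and the lemma $0\le M(\kappa\cK,N)\le 1$ yields a uniform bound $|C^N_{a^Nb^Nc^N}|\le B(a,b,c)$.

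Next I would perform the diagonal extraction. Since each $\Phi_n$ is finite and $n$ ranges over the integers bounded below, the basis $\Phi$ is countable; enumerate the set of triples as $(a_k,b_k,c_k)_{k\in\N}$. Starting from $(N)_{N\in\N}$, recursively select nested infinite subsequences $\N\supset S_1\supset S_2\supset\cdots$ such that along $S_k$ the bounded sequence $C^N_{a_k b_k c_k}$ converges in $\C$ (Bolzano–Weierstrass). The Cantor diagonal sequence $(N_j):=($ the $j$-th element of $S_j)$ is then eventually contained in every $S_k$, so $\lim_{j\to\infty}C^{N_j}_{a_k b_k c_k}$ exists for every $k$. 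Defining $C^\infty_{abc}$ to be this limit for every triple in $\Phi$ produces a well-defined candidate structure constant.

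Finally, I would apply Theorem~\ref{thm:mainthm} to the sub-system $(V^{N_j},f_{N_iN_j})$, which is still a grading-restricted direct system (the defining conditions of Definition~\ref{Vdlimit} are preserved under passage to an infinite subsequence), and whose structure constants now converge by construction. The resulting direct limit is therefore a grading-restricted VA, completing the proof. The main obstacle in executing this plan is bookkeeping the claim that the sum in (\ref{eq:oligo}) reduces, for $N$ large, to a finite $N$-independent sum of $M(\kappa\cK,N)$'s; this hinges on the oligomorphic and nesting properties and on the CFT-type assumption (\ref{CFTtype}), which forces each $|\cK_i|\le\wt$ of the corresponding vector and hence restricts attention to finitely many orbits of subsets — everything else in the argument is then a straightforward diagonal selection.
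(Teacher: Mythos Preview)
Your proposal is correct and follows essentially the same route as the paper: you use the boundedness lemma for $M(\kappa\cK,N)$ together with countability of $\Phi$ to run a diagonal extraction (your Cantor diagonal versus the paper's ``keep the first $k$ terms, then thin the tail'' variant are equivalent standard devices), and then invoke Theorem~\ref{thm:mainthm} on the resulting subsequence. The paper handles your stated ``main obstacle'' --- that the sum in (\ref{eq:oligo}) is, for large $N$, a finite $N$-independent combination of the $M(\kappa\cK,N)$ --- exactly as you outline, by appealing to the oligomorphic and nesting conditions to fix $N$-independent representatives for $[\kappa]$ and $[\sigma]$.
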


\section{The large \texorpdfstring{$N$}{N} limit of VOAs}

\subsection{Large central charge limit of Virasoro VOAs}\label{ss:Virasorolimit}
Let us now discuss the large $N$ limit of vertex operator algebras. As the most basic example, let us start out with a sequence of Virasoro VOAs of increasing central charge.

Let $V^N=Vir_{cN}$ be the Virasoro VOA of central charge $cN$ for some $c>1$ with conformal vector $\omega^N$. 
For each $N$, we define a re-scaled copy of the Virasoro algebra by taking $\tilde \omega^N :=\frac{\omega^N}{\sqrt{N}}$ and $Y_N(\tilde\omega^N,z) = \sum_{n\in\Z} \tilde L^N_n z^{-n-2}$ satisfying
\be\label{Virtildecomm}
[\tilde L_m^N,\tilde L_n^N] = \frac1{\sqrt{N}}(m-n)\tilde L^N_{m+n} + \frac c{12}m(m^2-1) \delta_{m,-n}\idV\ .
\ee
Now define
\be
f_{1N}(\vac) = \vac\ , 
\qquad f_{1N}(\omega^1) = \frac{1}{\sqrt{N}}\omega^N = \tilde \omega^N
\ee
and recursively
\be
f_{1N}(L^1_{-n}a) = \frac1{\sqrt{N}}L^N_{-n}f_{1N}(a)\ .
\ee
The maps $f_{1N}$ are clearly bijective, so that we can define connecting maps
\be
f_{MN} = f_{1N}\circ f^{-1}_{1M}\ .
\ee
These clearly satisfy the conditions of definition~\ref{Vdlimit}. 

We claim that the VA-limit is given by the following grading-restricted VA:
Define the Lie algebra
\be\label{Virinfcomm}
[L_m^\infty,L_n^\infty] = \frac c{12}m(m^2-1) \delta_{m,-n}\idV\ ,
\ee
which acts on the graded vector space $Vir_\infty := \mathcal{U}(\mathcal{L}^\infty)\otimes_{\mathcal{U}_{\mathcal{L}^\infty_{(\leq 1)}}}\C\vac$, where as usual $L^\infty_n \vac=0$ for $n\geq -1$. Together with the state-field map $Y(\omega^\infty,z) = \sum_n L^\infty_n z^{-n-2}$ this is then indeed a grading-restricted VA, the grading operator being $L(0)=L^\infty_0$. 

To see that $(Vir_\infty,Y)$ is indeed the VA-limit of the above system, note that $Vir_\infty= \varinjlim  Vir_{cN}$ as a graded vector space. To show that the structure constants $C^N_{abc}$ converge to $C^\infty_{abc}$, proceed as following: Evaluate
\be
b^N_n c^N 
\ee
recursively using Borcherds' identity until it is a linear combination of terms the form $\tilde L^N_{n_1} \cdots \tilde L^N_{n_k}\vac$. Then commute modes $\tilde L^N_n$ with $n\geq-1$ to the right, picking up commutator terms from (\ref{Virtildecomm}). The result is a linear combination of states $a$, from which we can read off the structure constants $C^N_{abc}$.
The point is that this computation differs from the computation of $C^\infty_{abc}$ using (\ref{Virinfcomm}) only by terms of order $O(N^{-1/2})$, so that 
\be
C^\infty_{abc}= C^N_{abc} + O(N^{-1/2})\ ,
\ee
so that $Vir_\infty$ is indeed the VA-limit of $Vir_{cN}$. It is however not a VOA, since (\ref{Virinfcomm}) is not the Virasoro algebra.

\subsection{M\"obius-conformal VAs and Unitary VAs}
The above example shows that the VA-limit of a family of VOAs is in general not a VOA. However, it is not just a grading-restricted VA, but also a M\"obius-conformal VA \cite{MR1651389}. That is, even though it no longer contains a full copy of the Virasoro algebra, it still contains a copy of the global conformal algebra $sl_2(\C)$.

\begin{prop}
The VA-limit $(V^\infty,\vac,Y_\infty)$ of a system of VOAs of CFT type is a grading-restricted M\"obius-conformal VA of CFT type.
\end{prop}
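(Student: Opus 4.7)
The plan is to build Möbius generators $L^\infty(-1), L^\infty(0), L^\infty(1)$ on $V^\infty$ and verify both the $sl_2$-relations and the $L^\infty(1)$-bracket formula. The key algebraic point enabling the limit is that the central term $\frac{c}{12}m(m^2-1)\delta_{m,-n}\idV$ of the Virasoro commutator vanishes identically on the $sl_2$-triple $m,n\in\{-1,0,1\}$, so the $sl_2$ subalgebra survives the large central-charge limit even when the full Virasoro algebra cannot. The CFT-type condition passes to the limit for free: because each $V^N$ is of CFT type and the connecting maps preserve grading and $\vac$, we have $V^\infty_{(n)}=0$ for $n<0$ and $V^\infty_{(0)}=\C\vac$. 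Two of the three Möbius generators are supplied directly by Theorem~\ref{thm:mainthm}: the grading operator $L^\infty(0)$ is part of the grading-restricted structure (with its bracket formula already established), and the translation operator $L^\infty(-1)=T$ defined by $Ta:=a_{-2}\vac$ together with $[T,Y_\infty(v,z)]=\partial_z Y_\infty(v,z)$ is intrinsic to any VA.

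The substantive step is the construction of $L^\infty(1)$. My plan is to take
\be
L^\infty(1)\, a\;:=\;\lim_{N\to\infty} f_N\bigl(L^N(1)\, a^N\bigr)
\ee
in the restricted weak topology, i.e.\ $\langle b,L^\infty(1)a\rangle:=\lim_N\langle b^N,L^N(1)a^N\rangle$ for homogeneous $a,b$ with $\wt b=\wt a-1$. The hard part is showing these matrix elements converge, since they are not $Y_\infty$-structure constants and $L^N(1)=\omega^N_2$ involves $\omega^N$, which typically has no honest limit in $V^\infty$ (as the Virasoro example of Section~\ref{ss:Virasorolimit} shows: $\omega^N$ must be rescaled by $1/\sqrt N$ before a limit exists, and the modes of that limit no longer close into the Virasoro algebra). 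To handle this I would argue inductively on $\wt a$ using the $sl_2$-identity $[L^N(1),L^N(-1)]=2L^N(0)$: whenever $a^N=L^N(-1)w^N$ is a descendant, the identity $L^N(1)a^N=L^N(-1)L^N(1)w^N+2L^N(0)w^N$ reduces its matrix elements to convergent $L^N(-1)$-structure constants and $L^N(1)$-matrix elements of strictly smaller weight. The induction terminates at weight one, where $L^N(1)$ sends the finite-dimensional space $V^N_{(1)}$ into $V^N_{(0)}=\C\vac$; the remaining finite collection of scalars should converge as a consequence of compatibility with the already-established convergence of the $Y_\infty$-structure constants.

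Once $L^\infty(1)$ is in place, the $sl_2$-relations $[L^\infty(m),L^\infty(n)]=(m-n)L^\infty(m+n)$ for $m,n\in\{-1,0,1\}$ follow from the corresponding relations in $V^N$ by passing to the $N\to\infty$ limit, with the central term vanishing identically on the triple. The $L^\infty(1)$-bracket formula with $Y_\infty$ is then obtained by the same limit-exchange argument used for the $L^\infty(0)$-bracket formula in Theorem~\ref{thm:mainthm}: convergence of structure constants lets one push $N\to\infty$ through the formal power series in $z$ and through the commutator. The main obstacle I anticipate is precisely the matrix-element convergence step in constructing $L^\infty(1)$, which is where one really uses the $V^N$ being VOAs of CFT type and not merely grading-restricted VAs.
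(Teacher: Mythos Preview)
Your overall strategy---build $L^\infty(1)$ as the weak limit of $L^N(1)$ and inherit the $sl_2$-relations from the $V^N$---is exactly the paper's. The paper's proof, however, is a two-sentence sketch: it simply writes $L(1)u:=\lim_N L^N_1 u^N$ and asserts that since $L^N_0,L^N_{-1},L^N_1$ satisfy the $sl_2$ relations, so do the limits. It does not discuss convergence of these matrix elements, nor does it explicitly verify the $L(1)$-bracket formula or the passage of CFT type to the limit, all of which you address. So you are being more scrupulous than the paper, and your concern is legitimate: $L^N_1=\omega^N_2$ involves the conformal vector $\omega^N$, which in general is not the representative of any fixed element of $V^\infty$, so the numbers $\langle b^N,L^N_1 a^N\rangle$ are not among the structure constants $C^N_{abc}$ assumed to converge.

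That said, your proposed inductive fix has a real gap. The reduction step applies only ``whenever $a^N=L^N(-1)w^N$ is a descendant''; but quasiprimary vectors exist at every weight, not just weight one. If you instead decompose each $a^N$ as a quasiprimary part $q^N$ (annihilated by $L^N(1)$) plus $L^N(-1)w^N$, the decomposition---and hence $w^N$---is $N$-dependent, because the connecting maps are not VOA homomorphisms; so the induction hypothesis, which is about representatives of \emph{fixed} vectors of $V^\infty$, does not apply to $w^N$. Your weight-one base case is likewise hand-waved: the scalar $\langle\vac, L^N_1 a^N\rangle$ still involves $\omega^N$, and it is not explained how convergence of the $Y_\infty$-structure constants forces it to converge. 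In short, the paper sweeps the convergence issue under the rug and you correctly flag it, but your inductive argument as written does not close the gap.
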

\begin{proof}
Since VOAs are special cases of grading-restricted VAs with $L(0)=L_0$ and $T=L(-1)=L_{-1}$, the only thing left to prove is the existence of the operator $L(1)$. We simply define it as the weak limit of $L_1^N$, $L(1)u:= \lim_{N\to\infty}L_1^N u^N$. Since $L^N_0,L^N_{-1},L^N_{1}$ satisfy the commutation relations of the M\"obius $sl_2(\C)$ Lie algebra, so do $L(0),L(-1),L(1)$.
\end{proof}

Let $V$ be a M\"obius-conformal VA of CFT type. We say $a\in V$ is \emph{quasiprimary} if $L(1)a=0$. $V$ is then spanned by all quasiprimary fields and their $L(-1)$-derivatives (see \eg Remark 4.9d in \cite{MR1651389}).
On $V$, define the bilinear form $B$ from
\be
a_{\wt a+\wt b -1}b  =: B(a,b)\vac\ .
\ee
From skew symmetry it follows that
\be
B(a,b)= (-1)^{\wt a+\wt b}B(b,a)\ .
\ee
We have
\be\label{Bdesc}
B(a,L(-1)b)= L(-1)a_{\wt a+\wt b} b
+ [L(-1),a_{\wt a +\wt b} ] b
= - (\wt a + \wt b) B(a,b)\ ,
\ee
where we used that $a_{\wt a+\wt b}b=0$.
(\ref{Bdesc}) shows that if we know $B$ on the subspace of quasiprimaries, its value on all descendants follows. In particular, if two quasiprimaries are orthogonal to each other, then so all are their descendants.
Finally, if $a,b$ are quasiprimaries, then
$B(a,b)$ vanishes unless $\wt a=\wt b$:
Using the commutation relation
\be
[L(1),a_n]=-(n+2-2\wt a) a_{n+1}+ (L(1)a)_{n+1}\ ,
\ee
we have 
\be
L(1) a_{\wt a+\wt b-2}b = [L(1),a_{\wt a+\wt b-2}]b = -(\wt b -\wt a)a_{\wt a+\wt b-1} b = (\wt a- \wt b)B(a,b)\vac\ .
\ee
The state on the left hand side has weight 0 and is therefore the vacuum. However, since the vacuum is not in the image of $L(1)$, it must vanish. It follows that either $\wt a=\wt b$ or $B(a,b)=0$.
In total we have that $B$ restricted to the subspace of quasiprimaries is blockdiagonal.

Finally, let us say a few words about unitary VAs.
We often want to work with VAs whose bilinear form $B$ is non-degenerate. From (\ref{Bdesc}) it follows that for this it is enough to ensure that $B$ is positive definite on all (finite dimensional) subspaces of quasiprimaries of a given weight. An example of VAs with such a $B$ are unitary VAs:
If the VA $V$ is unitary, then for $a,b$ quasiprimary with $\wt a=\wt b$ the bilinear form $B$ is related to the inner product through
\be
B(a,b) = (\vac, a_{2\wt a-1}b) = (\theta(a),b)\ ,
\ee
where $\theta$ is the anti-linear involution and $(,)$ the positive definite Hermitian form on $V$ \cite{MR3119224}. If we choose a real basis, that is $\theta(a)=a$, then the bilinear form $B$ is given by the same matrix as the inner product, so that it is in particular non-degenerate.

Assume we have a system of unitary grading-restricted VAs $V^N$ with connecting maps $f_{MN}$ that preserve the anti-linear involutions and inner products $\theta_M$ and $(,)_M$, that is $\theta_N\circ f_{MN}=f_{MN}\circ \theta_M$ and $(u,v)_M=(f_{MN}(u),f_{MN}(v))$. Then the VA limit $V^\infty$ is again unitary, with $\theta_\infty(u):=f_N(\theta_N(u^N))$ and $(u,v)_\infty:=(u^N,v^N)_N$. These are clearly again an anti-linear involution and a positive definite Hermitian form.

We note that the connecting maps for permutation orbifolds introduced in definition~\ref{permconnectmap} are compatible with the unitary structure.

\section{Factorization}\label{s:factorize}

\subsection{Factorization in VAs}
The example discussed in section~\ref{ss:Virasorolimit} has another interesting property. Taking a closer look at (\ref{Virinfcomm}), we see that $V^\infty$ is a special kind of VA: it \emph{factorizes}.
\begin{defn}
Let $V$ be a vertex algebra. We say $V$ \emph{factorizes} if there is a set $A\subset V$ of vectors that generate $V$ and that satisfy
\be\label{factorizecommutator}
[a_n,b_m] = D(a,b,n,m)\idV
\ee
for some function $D$ for all $a,b\in A$ and $n,m\in \Z$.
\end{defn}
In physics, such VAs are also called \emph{free field theories}. To avoid confusion with the notion of a free algebra, we use the term `factorization' instead. 

For VAs that factorize, computations become quite simple: Using Borcherds' formula, expressions such as matrix elements or correlation functions can be written in terms of modes of generator fields. These modes can then be commuted through, picking up identity operators only. The matrix elements can thus be obtained by so-called Wick contractions. 

For completeness, let us make this more precise (see for instance also section 3.3 in \cite{MR1651389}).
Using
\be\label{commformula}
[Y(a,z),Y(b,w)]= \sum_{n=0}^\infty Y(a_nb,w) \partial^{(n)}_w \delta(z-w)
\ee
it follows that
\be
[Y(a,z),Y(b,w)]= \sum_{n=0}^\infty \idV D(a,b,n,-1) \partial^{(n)}_w \delta(z-w)
\ee
since for $n\geq0$, $a_nb = a_nb_{-1}\vac =[a_n,b_{-1}]\vac = D(a,b,n,-1)\vac$.

If $V$ is grading-restricted and all generators are homogeneous, we can say something more: (\ref{commformula}) then implies that 
\be
[Y(a,z),Y(b,w)]= \idV B(a,b) \partial^{(\wt(a)+\wt(b)-1)}_w \delta(z-w)\ ,
\ee
which in turn fixes the commutator to be 
\be\label{commcondition}
[a_n,b_m]= B(a,b)\idV \binom{n}{\wt a+\wt b-1} \delta_{n-\wt a+1, -m+\wt b-1}\ .
\ee
Define the annihilation part $Y^+(a,z)$ and the creation part $Y^-(a,z)$ of a field as
\be
Y(a,z) = Y^+(a,z)+Y^-(a,z) = \sum_{n\geq 0}a_n z^{-n-1} + \sum_{n<0}a_n z^{-n-1}\ .
\ee
We then have
\be
[Y^\pm(a,z),Y^\pm(b,w)]=0\ .
\ee
This follows from (\ref{commcondition}) for the $Y^-$ commutator, and from $[a_n,b_m]\vac=0$ for $n,m\geq 0$ for the $Y^+$ commutator.
Finally we have
\bea
{[}Y^+(a,z),Y^-(b,w)]&=& i_{z,w} \frac{B(a,b)}{(z-w)^{\wt a+\wt b}} \idV \\
{[}Y^-(a,z),Y^+(b,w)]&=& -i_{w,z} \frac{B(a,b)}{(z-w)^{\wt a+\wt b}}\idV 
\eea
where $i_{z,w}$ indicates taking the formal power series given by the series expansion of the function for $|z|>|w|$. The functions appearing on the right-hand side are often called \emph{Wick functions} in physics.

We can use these commutators to compute \emph{correlation functions}
\be
\langle \vac, Y(u_1,z_1)Y(u_2,z_2)\ldots Y(u_n,z_n)\vac\rangle\ .
\ee
To do this, we first use  Borcherds' identity to recursively write out $Y(u,z)$ in terms of (residues of) products of generators $Y(a,z)$, giving a correlation function
\be\label{correlator}
\mathcal{C} = \langle \vac, Y(a_1,z_1) \cdots Y(a_m,z_m)\vac\rangle \ .
\ee
We then split all fields into creation and annihilation parts and commute the annihilators to the right, where they annihilate the vacuum, $Y^+(a,z)\vac=0$. This leaves only terms with creation parts and with Wick functions. However, due to the grading, any terms containing creation parts will have a vanishing matrix element when paired with $\vac$. The correlator (\ref{correlator}) is thus simply given by a sum over all possible product Wick functions,
\be
\mathcal{C} = \left\{\begin{array}{cc}\sum_{p \in P^2_{m/2}}\prod_{\{i,j\}\in p} \frac{B(a_i,a_j)}{(z_i-z_j)^{\wt a_i+\wt a_j}} &: m \textrm{\ even}\\
0 &: m  \textrm{\ odd} \end{array}\right. \ .
\ee
Here $P^2_n$ denotes all partitions of the set $\{1,2,\ldots,n\}$ into disjoint pairs, and the product is over all such pairs in the partition $p$. In physics this is called Wick's theorem. It is the analogue of Isserli's theorem in probability theory.

\subsection{The large \texorpdfstring{$N$}{N} limit of symmetric orbifolds}

Before proving the general theorem, let us give one more example of a large $N$ limit that factorizes. Consider symmetric orbifolds, that is permutation orbifolds for which $G_N=S_N$. These were worked out as an example in \cite{Gemunden:2019hie}. 
Picking vectors $v_{1},v_2,v_3$ with $\supp(v_i)=\cK_i$ and $|\cK_i|=K_i$, (\ref{MGLmq}) is given by
\be
 M(\kappa,N) =\left(\frac{(N-K_1)!(N-K_2)!(N-K_3)!}{N!(N-\frac12(K_1+K_2+K_3-n_t(\kappa)))!^2}\right)^{1/2}\ .
\ee
Here we defined  $n_t(\kappa):=|\kappa_1\cK_1\cap\kappa_2\cK_2\cap\kappa_3\cK_3|$ as the length of the triple overlap set under the configuration $\kappa$.
Using Stirling's approximation it follows that for $n_t(\kappa)>0$,
\be\label{Mn3}
 M(\kappa \cK,N) = O(N^{-n_t(\kappa)/2})
\ee
for $N\to\infty$, and for $n_t(\kappa)=0$ 
\be
 M(\kappa \cK,N) \to 1\ . 
\ee
This establishes that only configurations $\kappa$ contribute that have $n_t(\kappa)=0$. Theorem~\ref{mainoligofactor} below will show that therefore symmetric orbifolds indeed factorize in the large $N$ limit. For the moment, we want to use (\ref{Mn3}) to discuss how symmetric orbifolds are generated.

\begin{defn}
Let $v\in V^N:= (V^{\otimes|I_N|})^{G_N}$. We say $v$ is a \emph{single-trace} if $|\supp(v)|=1$. We say $v\in V^\infty$ is single-trace if $v=f_N(v^N)$ for some single-trace vector $v^N$.
\end{defn}

\begin{prop}
Let $V^\infty = \varinjlim  (V^{\otimes N})^{S_N}$ be the limit VA of symmetric orbifolds. Then $V^\infty$ is generated by single-trace vectors.
\end{prop}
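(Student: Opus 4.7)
I will prove by strong induction on the support size $k=|\supp(a)|$ that every basis vector $a\in V^\infty$ of the form $a=f_M(\pi_M(v))$ (with $v$ a tensor-product state of definite support) lies in the sub-VA $\langle \cS\rangle\subseteq V^\infty$ generated by the single-trace vectors. The base cases are immediate: $k=0$ gives $a=\vac$, which lies in every VA, and $k=1$ gives $a\in \cS$ by definition.

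\textbf{Inductive step.} Fix $k\geq 2$ and assume every basis vector of support size strictly less than $k$ lies in $\langle\cS\rangle$. Let $a$ be a basis vector of support size $k$, represented (for $M$ large enough that $V^M_{(\wt a)}$ is saturated) as the $S_M$-orbit of $\psi_1^{(1)}\otimes\cdots\otimes\psi_k^{(k)}\otimes\vac\otimes\cdots$, where $\psi_j^{(i)}$ denotes the non-vacuum basis element $\psi_j\in\Psi$ placed at tensor slot $i$. Split off the last occupied slot and define
\begin{align*}
a' &:= f_M\bigl(\pi_M(\psi_1^{(1)}\otimes\cdots\otimes\psi_{k-1}^{(k-1)}\otimes\vac\otimes\cdots)\bigr),\\
s  &:= f_M\bigl(\pi_M(\vac\otimes\cdots\otimes\vac\otimes\psi_k^{(k)}\otimes\vac\otimes\cdots)\bigr).
\end{align*}
Then $a'$ has support size $k-1$ and hence lies in $\langle \cS\rangle$ by the inductive hypothesis, while $s\in\cS$ is single-trace. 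Consider $s_{-1}a'\in V^\infty$, whose weight is $\wt(s)+\wt(a')=\wt(a)$. The goal is to show that
\begin{equation*}
s_{-1}a' = c\cdot a + \sum_{b:\ |\supp(b)|\leq k-2}\lambda_b\, b,\qquad c\neq 0,
\end{equation*}
from which $a\in\langle\cS\rangle$ follows by solving for $a$ and applying the inductive hypothesis to the remaining $b$.

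\textbf{Key computation.} To identify which basis vectors $b$ appear with nonzero coefficient, use the symmetric-orbifold three-point formula (\ref{eq:oligo}) together with the asymptotic (\ref{Mn3}): in the $N\to\infty$ limit only configurations $\kappa$ with empty triple overlap $\kappa_1\cK_s\cap\kappa_2\cK_{a'}\cap\cK_b=\emptyset$ survive, and additionally $\cK_o=\emptyset$ is forced by the vacuum-axiom vanishing (\ref{1ptfctvanish}). Since $|\cK_s|=1$, a short case analysis yields exactly two families of surviving configurations: either $\kappa_1\cK_s$ is disjoint from $\kappa_2\cK_{a'}$, forcing $\cK_b=\kappa_1\cK_s\sqcup\kappa_2\cK_{a'}$ of size $k$ whose associated multiset of seed-values is precisely $\{\!\!\{\psi_1,\dots,\psi_k\}\!\!\}$, so $b=a$; or $\kappa_1\cK_s$ sits on some site of $\kappa_2\cK_{a'}$, forcing $|\cK_b|=k-2$ (a ``contraction'' of $s$ against one $\psi_j$ in $a'$), which falls under the inductive hypothesis. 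For the disjoint contributions each seed-VA site factor reduces to $c(\psi_j,\vac,\psi_j)=1$ at sites of $a'$ (by the vacuum axiom) and $c(\psi_k,\psi_k,\vac)=1$ at the site of $s$ (by creativity, using $(\psi_k)_{-1}\vac=\psi_k$), while $M(\kappa\cK,N)\to 1$ on disjoint configurations. Summing the strictly positive contributions over the finite set of classes $[\kappa]$ of disjoint configurations gives $c\neq 0$, completing the induction.

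\textbf{Main obstacle.} The principal technical hurdle is the dichotomy (disjoint vs.\ overlapping) in the case analysis, together with the verification that after taking the large-$N$ limit of the intricate prefactor in (\ref{eq:oligo}), the coefficient $c$ of $a$ remains nonzero rather than collapsing to zero through combinatorial cancellations. A minor secondary point is the treatment of multisets with repeated entries (some $\psi_i=\psi_j$), which alters the combinatorial multiplicity but not its positivity, so the conclusion is unaffected.
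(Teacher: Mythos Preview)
Your proposal is correct and follows essentially the same approach as the paper: both argue by induction on the support size, split off a single-trace factor $s$ from $a$ to form $a'$ of one lower support, and use the asymptotic (\ref{Mn3}) for the symmetric group to conclude that in $V^\infty$ the product $s_{-1}a'$ reproduces $a$ up to terms of support at most $k-2$. The only organizational difference is that the paper first establishes the analogous identity in $V^N$ before passing to the limit, whereas you work directly in $V^\infty$; your justification that the leading coefficient $c$ is nonzero (via positivity of the disjoint contributions) is in fact slightly more explicit than the paper's own argument.
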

\begin{proof}
First let us prove that $V^N$ is generated by single-trace vectors using induction in $n=|\supp(v)|$. The base case $n=1$ is immediate. Let $v$ have $|\supp(v)|$. We can write 
\be
v^N = \pi_N(v^1\otimes v^2 \otimes \cdots \otimes v^n \otimes \vac \otimes \cdots \otimes \vac)
\ee
Next define $u^N=\pi_N(v^n \otimes \vac \cdots \vac)$ and $w^N=\pi_N(v^1\otimes \cdots \otimes v^{n-1}\otimes\vac\cdots \vac$
We then have 
\be
v^N = u^N_{-1}w^N + \ldots
\ee
where the $\ldots$ are vectors of support length $n-1$ or less. By induction, this establishes that $V^N$ is generated by single-trace vectors.

Next consider $v=f_N(v^N) \in V^\infty$. Consider $C^\infty_{xuw}=\langle x,u_{-1}w\rangle$. By (\ref{Mn3}), the structure constant vanishes unless $n_t=0$, which implies that $|\supp(x)|=n$ or $n-2$. The former automatically implies that $x=v$. It follows that 
\be
v = u_{-1} w + \ldots
\ee
where the states in $\ldots$ have support $n-2$ or less. By induction it follows that $V^\infty$ is also generated by single-trace states.

\end{proof}

As a side remark, let us mention that even if the seed VA $V$ is finitely generated, $V^\infty$ is not:
\begin{prop}
Unless $V$ is trivial, $V^\infty = \varinjlim  (V^{\otimes N})^{S_N}$ is not finitely generated.
\end{prop}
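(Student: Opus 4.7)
The plan is to exhibit a subspace invariant preserved by the VA operations on $V^\infty$ that forces any finitely generated subalgebra to miss infinitely many single-trace states. Suppose for contradiction that $V^\infty$ is generated as a VA by finitely many vectors $g_1,\ldots,g_r$. First I would pick $N_0$ with representatives $g_i^{N_0}\in V^{N_0}$, expand each in the orbit basis $\pi_{N_0}(\cF^{N_0})$, and let $W\subseteq V$ be the finite-dimensional span of all seed basis elements $\psi\in\Psi$ that appear as a tensor factor of any orbit in these expansions. Let $\tilde W\subseteq V$ be the smallest $L(-1)$-invariant subspace containing $W$; note that $\dim\tilde W_{(n)}\leq\dim W$ for every weight $n$, since each basis element $w\in W$ of weight $w_0$ contributes at most one descendant $L(-1)^{n-w_0}w$ at weight $n$.

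The key claim to establish is that every vector in $\langle g_1,\ldots,g_r\rangle\subseteq V^\infty$ has a representative in some $V^N$ whose orbit expansion uses only arms in $\tilde W$. I would prove this by induction on the number of applied mode operations. In the inductive step one computes $(v^N)_{(n)}(w^N)$ via the tensor-product mode formula and then invokes the structure-constant analysis of section~\ref{ss:permstruc}: any ``same-position'' arm interaction that could create a new arm content (via a seed-VA mode product $(\psi)_m\phi$ with both $\psi,\phi$ non-vacuum at the same tensor slot) corresponds to a configuration with $n_t(\kappa)>0$. Such configurations contribute $M(\kappa\cK,N)\to 0$ by~(\ref{MGLmq}) and hence vanish in the limit defining $V^\infty$. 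The surviving configurations place arms at disjoint positions, so arms either stay put (as $L(-1)$-descendants via vacuum contractions $(\psi)_m\vac$) or get contracted to scalars via the bilinear form $B$ on $V$; in either case, arm contents stay in $\tilde W$.

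Finally, since $V$ is non-trivial of CFT type, $V$ is infinite-dimensional and $\dim V_{(n)}\to\infty$ (a standard VA fact: iterated normal-ordered products $\psi_{(-1)}^k\vac$ together with their $L(-1)$-descendants produce unboundedly many independent states at high weight). Comparing with $\dim\tilde W_{(n)}\leq\dim W$, we conclude $\tilde W\subsetneq V$, so we can pick $\psi_*\in V\setminus\tilde W$. The single-trace $s_{\psi_*}:=f_N(\pi_N(\psi_*\otimes\vac^{\otimes(N-1)}))\in V^\infty$ then has arm content $\{\psi_*\}\not\subseteq\tilde W$, so $s_{\psi_*}\notin\langle g_1,\ldots,g_r\rangle$, which is the desired contradiction.

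The main obstacle is rigorously verifying arm-content preservation under the limiting mode operations in $V^\infty$: one must carefully identify which combinatorial configurations in the tensor-product mode formula correspond to the $n_t(\kappa)>0$ contributions that are suppressed by $M(\kappa\cK,N)\to 0$, and check that the surviving configurations genuinely produce arms only in $\tilde W$. A secondary point is the growth $\dim V_{(n)}\to\infty$ for any non-trivial $V$ of CFT type; this follows from standard VA theory but may deserve a brief justification or citation.
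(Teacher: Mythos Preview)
Your approach is quite different from the paper's. The paper gives a two-sentence growth comparison: any finitely generated VA satisfies $\log\dim V_{(n)}\le A\sqrt{n}$ for some constant $A$, while for the symmetric-orbifold limit $\log\dim V^\infty_{(n)}\sim n/\log n$ (citing \cite{Belin:2014fna}); no structural analysis of the mode products in $V^\infty$ is used at all. Your route through arm-content preservation is more hands-on and ties in with the factorization machinery of Section~\ref{s:factorize}, which is appealing, but it is considerably longer and carries gaps the paper's argument avoids.

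There are two genuine problems. First, the phrase ``orbit expansion uses only arms in $\tilde W$'' is not well-posed: arms in the orbit basis are elements of the fixed seed basis $\Psi$, whereas $\tilde W$ is a subspace that need not be spanned by a subset of $\Psi$ (for example $L(-1)\psi$ may be a nontrivial combination of several basis vectors). What you actually need is that $b_{(n)}c$ lies in the subspace $S_W:=\bigcup_N f_N\bigl(\pi_N((\C\vac\oplus\tilde W)^{\otimes N})\bigr)$; your $n_t(\kappa)=0$ reasoning does yield this once one tracks the output as a \emph{vector} rather than through its coordinates in $\Phi$, but the write-up conflates the two. Second, and more seriously, the ``standard VA fact'' that $\dim V_{(n)}\to\infty$ for non-trivial $V$ of CFT type is simply false for grading-restricted VAs. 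Take the two-dimensional commutative VA $V=\C\vac\oplus\C v$ with $\wt v=1$, $Tv=0$, and $v_{-1}v=0$: this is non-trivial of CFT type, your normal-ordered products $\psi_{(-1)}^k\vac$ all vanish for $k\ge 2$, and one checks that $\dim V^\infty_{(n)}=1$ for every $n$ with $V^\infty$ generated by the single element $s_v$. So the proposition as literally stated already fails for this seed; both your argument and the paper's $n/\log n$ asymptotic tacitly assume extra growth of $V$ (e.g.\ that $V$ is a genuine VOA with $c>0$, so that Cardy-type growth of $\dim V_{(n)}$ feeds into the cited asymptotic).
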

\begin{proof}
For any finitely generated VA, the asymptotic growth of $\log \dim V_{(n)}$ for $n\to \infty$ is bounded by $A \sqrt{n}$ for some constant $A$. On the other hand, $\log \dim V^\infty_{(n)}\sim n/\log n$ as $n\to\infty$ (see \eg \cite{Belin:2014fna}).
\end{proof}

\subsection{Factorization for oligomorphic permutation orbifolds}
Now we want to establish under what conditions oligomorphic permutation orbifolds factorize in the large $N$ limit. To this end, we first introduce the following definition:
\begin{defn}
We say a family of permutation groups $( G_N )_{N\in\N}$ \emph{has no finite orbits} if for every finite non-empty set $\cK \subset \N$, the length of the orbit of $\cK$ under $G_N$ diverges, 
\be
O_N(\cK)\to \infty\ .
\ee
\end{defn}

\begin{prop}\label{oligofactor}
Let $G_N$ be nested oligomorphic. Then $M(\cK,N)\to 0$ for all configurations $\cK$ such that $\cK_t\neq\emptyset$ if and only if $G_N$ has no finite orbits. 
\end{prop}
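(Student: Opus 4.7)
The plan is to work directly with the closed-form expression $(\ref{MK2})$ for $M(\cK,N)^2$ and reinterpret numerator and denominator via the orbit--stabilizer theorem. Throughout, I assume $\cK_o=\emptyset$, since otherwise $M(\cK,N)=0$ trivially. The key identity I will use repeatedly is
\be
\frac{|\pstab^{\cL}|}{|G_N|} \;=\; \frac{1}{[G_N:\sstab^{\cL}]\,[\sstab^{\cL}:\pstab^{\cL}]} \;=\; \frac{1}{O_N(\cL)\,|\Grem{\cL}^N|},
\ee
valid for any finite $\cL\subset\N$.

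For the ``if'' direction, I would exploit the inclusion $\cK_t\subset \cK_i$ for $i=1,2,3$: every element of $\pstab^{\cK_i}$ fixes $\cK_i$ pointwise and hence fixes $\cK_t$ pointwise, so $\pstab^{\cK_i}\subset \pstab^{\cK_t}$. Because $\pstab^{\cK_t}$ is a subgroup, the product set $\pstab^{\cK_1}\pstab^{\cK_2}\pstab^{\cK_3}$ lies inside $\pstab^{\cK_t}$. Combining with $(\ref{MK2})$ and the identity above yields
\be
M(\cK,N)^2 \;\leq\; \frac{|\pstab^{\cK_t}|}{|G_N|} \;=\; \frac{1}{O_N(\cK_t)\,|\Grem{\cK_t}^N|} \;\leq\; \frac{1}{O_N(\cK_t)}.
\ee
Since $\cK_t$ is a non-empty finite subset of $\N$, the hypothesis ``no finite orbits'' gives $O_N(\cK_t)\to\infty$, so $M(\cK,N)\to 0$.

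For the ``only if'' direction I argue contrapositively: suppose some non-empty finite $\cK\subset\N$ has $O_N(\cK)$ bounded along a subsequence. Take $\cK_1=\cK_2=\cK_3=\cK$, which gives $\cK_t=\cK$ (non-empty) and $\cK_o=\emptyset$. Then $\pstab^{\cK}\pstab^{\cK}\pstab^{\cK}=\pstab^{\cK}$ and
\be
M(\cK,N)^2 \;=\; \frac{|\pstab^{\cK}|}{|G_N|} \;=\; \frac{1}{O_N(\cK)\,|\Grem{\cK}^N|}.
\ee
By condition (\ref{condGK}) of Definition~\ref{conv}, $|\Grem{\cK}^N|$ stabilizes at the finite number $|\Grem{\cK}|$ for $N$ large. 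Along the chosen subsequence $O_N(\cK)$ stays bounded, so $M(\cK,N)$ is bounded away from $0$, contradicting the assumption that $M(\cK,N)\to 0$ for every configuration with $\cK_t\neq \emptyset$.

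The only nontrivial input is making sure $|\Grem{\cK}^N|$ cannot blow up with $N$ (which would otherwise let $M$ vanish without finite orbits being ruled out); this is precisely what nested oligomorphy (\ref{condGK}) secures. All remaining steps are standard orbit--stabilizer bookkeeping, so I don't expect an obstacle beyond correctly identifying which subgroups sit inside which.
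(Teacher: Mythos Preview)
Your proof is correct and follows essentially the same route as the paper's: both directions rely on the inclusion $\pstab^{\cK_1}\pstab^{\cK_2}\pstab^{\cK_3}\subset\pstab^{\cK_t}$ together with the orbit--stabilizer identity $|\pstab^{\cK_t}|/|G_N|=1/(O_N(\cK_t)\,|\Grem{\cK_t}^N|)$, and the converse is obtained via the configuration $\cK_1=\cK_2=\cK_3=\cK$. Your treatment is in fact slightly tidier in two places: in the ``if'' direction you simply bound $|\Grem{\cK_t}^N|\geq 1$ rather than invoking condition~(\ref{condGK}), and in the ``only if'' direction you correctly phrase the negation as boundedness along a subsequence.
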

\begin{proof}
Assume $G_N$ has no orbit of finite length. If $\cK_t\neq\emptyset$, then $\pstab^{\cK_1}\pstab^{\cK_2}\pstab^{\cK_3}\subset \pstab^{\cK_t}$. By the orbit-stabilizer theorem we have $O_N(\cK_t)=|G_N|/|\sstab^{\cK_t}|=|G_N|/|\pstab^{\cK_t}||G(\cK_t)|$, where we take $N$ large enough so that condition \ref{condGK} of definition~\ref{conv} applies. From (\ref{MK2}) it follows that
\be
M(\cK,N) \leq |G(\cK_t)|^{-1/2}O_N(\cK_t)^{-1/2} \to 0\ .
\ee
Conversely, let $\cK$ be a set whose orbit length $O_N(\cK)$ is bounded. Consider the configuration $\cK_1=\cK_2=\cK_3=\cK$. Using (\ref{MGLmq}), we have
\be
M(\cK,N) = \left(\frac{|\pstab^{\cK}|}{|G_N|}\right)^{1/2} = |G(\cK_t)|^{-1/2}O_N(\cK)^{-1/2}\ ,
\ee
which does not converge to 0.
\end{proof}

\begin{thm}\label{mainoligofactor}
The large $N$ limit of oligomorphic permutation orbifolds with no finite orbits factorizes.
\end{thm}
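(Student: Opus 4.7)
The plan is to take $A \subset V^\infty$ to be the set of single-trace vectors and verify both that $A$ generates $V^\infty$ and that $[a_n, b_m]$ is a scalar multiple of $\idV$ for every $a, b \in A$ and $n, m \in \Z$.

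For the generation step, I would argue by induction on the support size $|\supp(v)|$ of a basis vector $v \in \Phi$, imitating the symmetric-orbifold argument of the previous subsection. Given $v$ with $|\supp(v)| = p \geq 1$, pick a single-trace $u \in A$ whose seed occupies one position of $\cK_v$ and a vector $w$ with $|\supp(w)| = p-1$ carrying the remaining seeds; then $u_{-1} w$ reproduces $v$ up to a nonzero normalization together with corrections of support strictly less than $p$, which are handled by the induction hypothesis. What makes this work uniformly for a nested oligomorphic family is Proposition~\ref{oligofactor}: configurations with nonempty triple overlap drop out in the limit, so the expansion of $u_{-1} w$ does not leak outside the span of products of single-trace vectors.

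For the commutator condition, the standard identity
\begin{equation*}
[Y(a, z), Y(b, w)] = \sum_{k \geq 0} Y(a_k b, w)\, \partial^{(k)}_w \delta(z - w)
\end{equation*}
reduces the claim to showing $a_k b \in \C \vac$ for every $k \geq 0$, i.e.\ $C^\infty_{x a b} = 0$ for every basis vector $x \in \Phi$ of strictly positive weight. I would use (\ref{eq:oligo}): a configuration can contribute nontrivially only if $\cK_o = \emptyset$ (built into (\ref{MGLmq})) and, in the limit, also $\cK_t = \emptyset$ (this is exactly where Proposition~\ref{oligofactor} and the no-finite-orbit hypothesis enter). Together with $|\cK_a| = |\cK_b| = 1$, these conditions force each element of $\cK_a \cup \cK_b \cup \cK_x$ to lie in exactly two of the three supports, leaving only three possibilities for $x$: the vacuum; a single-trace $x$ whose support coincides with $\cK_a = \cK_b$, where $\cK_t \neq \emptyset$ and the contribution vanishes by Proposition~\ref{oligofactor}; or a double-trace $x$ with $\cK_x = \cK_a \cup \cK_b$ of size two and $\cK_a \cap \cK_b = \emptyset$. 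In this last case the seed-level product in $c^N(\ldots)$ collapses to $c(v, \vac, v')\, c(\vac, w, w') = \delta_{v, v'}\, B(w, w')$, which pins $\wt v' = \wt v$, $\wt w' = \wt w$, and so $\wt x = \wt a + \wt b$; but this corresponds to the mode index $k = \wt a + \wt b - \wt x - 1 = -1$, outside the commutator range $k \geq 0$. Combining the three cases yields $C^\infty_{x a b} = 0$ for all $x \neq \vac$, as required.

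The step I expect to demand the most care is the induction in the generation argument: for a general nested oligomorphic family one must track the interaction of the normalization prefactor in (\ref{fbardef}) with iterated applications of the $(-1)$-mode, and verify that the lower-order corrections are genuinely controllable by the induction hypothesis. The combinatorial case analysis in the commutator step is then short, with Proposition~\ref{oligofactor} doing the essential work of killing configurations with a nonempty triple overlap.
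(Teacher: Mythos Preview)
Your approach is correct and takes a genuinely different route from the paper. The paper chooses $A$ to be all states of definite support, so that generation is trivial; it then claims that for such $u,v$ and $k\ge 0$ every configuration with $\kappa_2\cK_u \neq \kappa_3\cK_v$ already vanishes at finite $N$ (citing $u(i)_k\vac=0=\vac_k v(i)$), and that the remaining configurations force $\cK_t\neq\emptyset$ whenever $w\neq\vac$. You instead take $A$ to be the single-trace vectors, trading a trivial generation step for your inductive one. What your choice buys is a robust commutator step: with $|\cK_a|=|\cK_b|=1$ the alternative ``supports coincide or supports disjoint'' is exhaustive, and in the disjoint case the seed factors force $\wt x(\cK_a)\ge\wt a$ and $\wt x(\cK_b)=\wt b$, hence $\wt x\ge\wt a+\wt b$ and $k\le -1$ (your expression for the seed product has the arguments slightly scrambled, but the weight count and conclusion are right). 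For multi-trace $u,v$ the paper's dichotomy fails, since the global mode index is distributed across tensor factors and positions in $\cK_u\cap\cK_v$ can carry positive index while those in $\cK_u\triangle\cK_v$ carry negative index; e.g.\ for a Heisenberg seed and $G_N=S_N$, the double-trace $u$ and single-trace $v$ give $u_1 v\propto v$ (equivalently $[{:}\alpha^2{:}_m,\alpha_n]\propto\alpha_{m+n-1}$), so the paper's generating set does not literally satisfy~(\ref{factorizecommutator}) and your restriction to single-trace is the right move. Your flagged concern about the generation step is apt: beyond tracking normalizations you should also confirm that the limiting $M(\cK,N)$ for the disjoint configuration is nonzero, not merely finite.
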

\begin{proof}
Clearly $V^\infty$ is generated by states of definite support. Take $u=\pi_N(u^N)$ and $v=\pi^N(v^N)$, where $u^N,v^N$ have support $\cK_u, \cK_v$.
We can specialize Borcherds' identity to obtain the following expression for the commutator (see \eg \cite{MR2023933}):
\be
[u_m,v_n] = \sum_{k\geq 0}\binom{m}{k}(u_kv)_{m+n-k}
\ee
We evaluate the structure constants $C^\infty_{wuv}=\langle w, u_k v\rangle$ by using (\ref{eq:oligo}). We first note that any configuration with $\kappa_3\cK_v \neq \kappa_2\cK_u$ automatically vanishes. This follows because for $k\geq 0$, $u(i)_k\vac=0$ and $(\vac)_k v(i)=0$.
For configurations with $\kappa_3\cK_v = \kappa_2\cK_u$, the structure constant does not vanish only if $\kappa_1\cK_w \subset \kappa_3\cK_v$. However, if $\kappa_1\cK_w\neq \emptyset$, then $\cK_t\neq \emptyset$, so that by proposition~\ref{oligofactor} $C^N_{wuv}\to 0$. It follows that $C^\infty_{wuv}=0$ unless  $\cK_w=\emptyset$, that is $w\in \C\vac$, which implies that indeed only the identity operator appears in the commutator.
\end{proof}

\subsection{Uniqueness of factorizing VAs}
Finally let us briefly discuss uniqueness of VAs that factorize.
Define $F^k$ to be the factorizing grading-restricted M\"obius VA generated by a quasiprimary $v$ of weight $k>0$.  That is, $Y(v,z)= \sum_n v_n z^{-n-1}$ with modes
\be\label{Fkcommutator}
[v_n,v_m]= \idV \binom{n}{2k-1} \delta_{n-k+1,-m+k-1}\ ,
\ee
acting on $\mathcal{U}(\mathcal{V})\otimes_{\mathcal{U}_{\mathcal{V}_{(\leq 1)}}}\C\vac$ , where the vacuum $\vac$ is as usual annihilated by $sl(2)$. Its character is 
\be\label{Fkcharacter}
Z_k(\tau) = \prod_{n\geq k}\frac1{(1-q^n)}\ .
\ee
To see (\ref{Fkcharacter}), we only need to establish that the vectors $v_{-n_1}v_{-n_2}\cdots v_{-n_l}\vac$, $n_i>0$ are linearly independent. But this follows from the fact that their duals maps $\langle \vac, v_{n_1+2k-1}\cdots v_{n_l+2k-1} \cdot\rangle$ are rank 1 and form a dual system to the vectors above, as follows from the commutation relations (\ref{Fkcommutator}).

\begin{prop}
Let $V$ be a grading-restricted M\"obius VA of CFT type with non-degenerate bilinear form $B$. Then $V$ is isomorphic as a VA to
\be
V \cong \bigotimes_{k=1}^\infty \left(F^k\right)^{\otimes N_k}
\ee
for some numbers $N_k \in \N_0$.
\end{prop}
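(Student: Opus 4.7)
I interpret this proposition as implicitly assuming that $V$ factorizes (the section title reads ``Uniqueness of factorizing VAs'', and formula (\ref{commcondition}) on which the argument rests was derived only under the factorization hypothesis), and proceed under that assumption. The plan is to produce canonical generators for $V$ among its quasiprimaries, identify them with the generators of suitable copies of $F^k$, and check that the resulting map is a VA isomorphism.

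First I would exploit the quasiprimary decomposition. Because $V$ is a grading-restricted M\"obius VA of CFT type, it is spanned by its quasiprimaries together with their $L(-1)$-descendants. Recall from the excerpt that $B$ is block-diagonal on quasiprimaries, restricting to a pairing $V^{qp}_{(k)}\times V^{qp}_{(k)}\to\C$ that is symmetric for even $k$ and skew-symmetric for odd $k$. Together with (\ref{Bdesc}), non-degeneracy of $B$ on $V$ forces non-degeneracy of each of these blocks. For every $k\geq 1$, set $N_k:=\dim V^{qp}_{(k)}$ and pick a basis $\{v^{k,i}\}_{i=1}^{N_k}$ of $V^{qp}_{(k)}$ in standard form for $B$: orthonormal when $k$ is even, and symplectic when $k$ is odd (which forces $N_k$ to be even, matching the parity constraint built into the natural definition of $F^k$).

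Next, factorization plus homogeneity of the generators plugs us directly into (\ref{commcondition}):
\be
[v^{k,i}_n,\,v^{l,j}_m]=B(v^{k,i},v^{l,j})\binom{n}{k+l-1}\delta_{n-k+1,\,-m+l-1}\,\idV\ ,
\ee
which vanishes unless $k=l$ (using the block-diagonal structure of $B$) and otherwise reduces to the standard form defining $F^k$. These are exactly the relations that define each tensor factor of $\bigotimes_k (F^k)^{\otimes N_k}$, and modes attached to different tensor factors mode-commute entirely. Appealing to the presentation of $F^k$ by generators and relations, I would then define a VA homomorphism $\Phi\colon \bigotimes_k (F^k)^{\otimes N_k}\to V$ sending the canonical generator of the $i$-th copy of $F^k$ to $v^{k,i}$. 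Surjectivity follows because $V$ is generated as a VA by all quasiprimaries (indeed $L(-1)^n v^{k,i}\propto v^{k,i}_{-n-1}\vac$ already lies in the image).

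The main obstacle is injectivity of $\Phi$, which reduces to a PBW-type independence statement: the ordered monomials $v^{k_1,i_1}_{-n_1}\cdots v^{k_\ell,i_\ell}_{-n_\ell}\vac$ with $n_j\geq 1$, under some fixed ordering of the triples $(k_j,i_j,n_j)$, should be linearly independent in $V$. The strategy is the one sketched in the excerpt for (\ref{Fkcharacter}): the commutation relations above pair each such monomial with a dual positive-mode monomial through $\langle \vac,\,\cdot\,\rangle$, and non-degeneracy of the restricted $B$ is precisely the hypothesis needed to make this dual system rank-one on each pair. Once independence is established, $\Phi$ is a graded bijection and hence a VA isomorphism. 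The most delicate bookkeeping will be the sign tracking in the odd-$k$ case where $B$ is symplectic; the symplectic-basis reduction of the first step should isolate this to a clean algebraic calculation.
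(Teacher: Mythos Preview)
Your approach has a genuine gap in the identification of generators. You set $N_k=\dim V^{qp}_{(k)}$ and then invoke (\ref{commcondition}) for \emph{every} quasiprimary, but factorization only asserts (\ref{factorizecommutator}) for the distinguished generating set $A$, not for arbitrary homogeneous vectors. Concretely, take $V=F^1\otimes F^1$ with Heisenberg generators $a^1,a^2$: the vector $(a^1_{-1})^2\vac$ is a weight-$2$ quasiprimary (one checks $L(1)$ annihilates it using $[a^1_0,a^1_{-1}]=0$), yet the modes of the normal-ordered square $:Y(a^1,z)^2:$ do not commute up to multiples of $\idV$. With your prescription this $V$ would have $N_1=2$ and $N_2\geq 1$, so $\bigotimes_k(F^k)^{\otimes N_k}$ already has strictly larger graded dimensions than $F^1\otimes F^1$, and your map $\Phi$ cannot be injective. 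The paper circumvents this by building the generators inductively: at weight $n$ it takes the $B$-orthogonal complement $W^\perp$ of the weight-$n$ piece of the sub-VA generated in lower weights, shows $W^\perp$ consists of quasiprimaries, and sets $N_n=\dim W^\perp$. That orthogonal-complement step is precisely what separates ``new'' free generators from composite quasiprimaries, and your argument is missing it.

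A smaller slip: your parity claim is off. From $B(a,b)=(-1)^{\wt a+\wt b}B(b,a)$ with both vectors of weight $k$ one gets the sign $(-1)^{2k}=1$, so $B$ is symmetric on $V^{qp}_{(k)}$ for every $k$. There is no symplectic case and no parity constraint on $N_k$; one simply diagonalizes to $B(v^i,v^j)=\delta^{ij}$ over $\C$, as the paper does.
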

\begin{proof}
For a grading-restricted VA $V$, denote by $V_{(\leq n)}:=\bigoplus_{k\leq n} V_{(k)}$, and denote $U^n:=\bigotimes_{k=1}^{n} \left(F^k\right)^{\otimes N_k}$. By induction in $n$, $V_{(\leq n-1)} \cong U^{n-1}_{(\leq n-1)}$. Denoting
\be
W:=U^{n-1}_{(n)}
\ee
we can use the fact that $B$ is non-degenerate to decompose $V_{(n)}=W\oplus W^\perp$. Note that all vectors in $W^\perp$ are quasi-primary: otherwise $W^\perp$ would contain a descendant of a quasiprimary of lower weight, which would therefore be in $W$ and not in $W^\perp$. We can thus choose a (in general complex) basis $v^i$ of $W^\perp$ such that $B(v^i,v^j)=\delta^{ij}$. This leads to commutators of the form (\ref{Fkcommutator}), and since the $v^i$ are orthogonal to each other, we have $W^\perp = ((F^n)^{\otimes \dim W^\perp})_{(n)}$. Because the $v^i$ have higher weight than all quasiprimaries in $U^{n-1}$, they are orthogonal to them. It follows that $V_{(\leq n)}$ can be written as $\left(\bigotimes_{k=1}^{n} \left(F^k\right)^{\otimes N_k})\right)_{(\leq n)}$ with $N_n=\dim W^\perp$.

\end{proof}

In particular this implies that as long as the VA-limit factorizes and has a non-degenerate bilinear form $B$, then the limit is unique, that is independent of the choice of connecting maps $f_{MN}$. 

Let us summarize the various results that we have found for the physically most relevant case of unitary VOAs of CFT type:
\begin{prop}
Let $V^N$ be a family of unitary VOAs of CFT type together with connecting maps $f_{MN}$ forming a grading-restricted system as in definition~
\ref{Vdlimit} and compatible with the unitary structure. If the structure constants $C^N_{abc}$ converge for all basis vectors, then the VA-limit $(V^\infty,Y_\infty)$ exists and is a grading-restricted unitary M\"obius VA of CFT type. 
Moreover, if this $V^\infty$ factorizes, then the limit is unique up to isomorphism:
that is, if $(V^N,f_{MN})$ and $(V^N,\tilde f_{MN})$ are two systems whose connecting maps $f$ and $\tilde f$ both satisfy the above conditions and whose VA-limits both factorize, then the two limits are isomorphic as grading-restricted VAs.
\end{prop}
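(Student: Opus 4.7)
The first assertion is essentially a bookkeeping aggregate of earlier results and the plan is to unpack it accordingly. The existence of $V^\infty$ as a grading-restricted VA with the prescribed state-field map $Y_\infty$ is immediate from theorem~\ref{thm:mainthm} once convergence of all $C^N_{abc}$ is assumed. That $V^\infty$ is of CFT type follows from the fact that each $V^N$ is of CFT type, together with the grading-restriction (conditions (4), (5) of definition~\ref{Vdlimit}) which forces $V^\infty_{(n)}=0$ for $n<0$ and $V^\infty_{(0)}=\C\vac$. The M\"obius-conformal structure is obtained by defining $L(1)$ as the weak limit of $L_1^N$ in the manner spelled out in the proposition of section~5.2; the $sl_2(\C)$ commutators survive the limit termwise. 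Finally, because the $f_{MN}$ are assumed compatible with $\theta_N$ and $(\cdot,\cdot)_N$, the involutions and inner products on the $V^N$ patch to $\theta_\infty$ and $(\cdot,\cdot)_\infty$ on $V^\infty$ as described at the end of section~5.2, giving unitarity.

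For uniqueness of the limit under the additional factorization hypothesis, I would invoke the preceding classification proposition. Unitarity forces $\theta_\infty$ to be an anti-linear involution and $(\cdot,\cdot)_\infty$ to be positive definite, so restricting to a real basis of quasiprimaries makes the bilinear form $B$ coincide with the (positive definite) Hermitian form and hence be non-degenerate on each $V^\infty_{(n)}$. The preceding proposition then yields
\be
V^\infty \;\cong\; \bigotimes_{k=1}^\infty \left(F^k\right)^{\otimes N_k}, \qquad \tilde V^\infty \;\cong\; \bigotimes_{k=1}^\infty \left(F^k\right)^{\otimes \tilde N_k},
\ee
as M\"obius VAs, and the numbers $N_k$ are intrinsic invariants, determined by the graded dimensions of $V^\infty$ through the character identity obtained from (\ref{Fkcharacter}) (inductively, $N_k$ is the dimension of the orthogonal complement of the image of the quasiprimaries from $\bigotimes_{j<k}(F^j)^{\otimes N_j}$ inside $V^\infty_{(k)}$).

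The crux is therefore to show $N_k = \tilde N_k$ for all $k$, which reduces to $\dim V^\infty_{(k)} = \dim \tilde V^\infty_{(k)}$. But condition (\ref{dimsat}) of definition~\ref{Vdlimit} is a condition on the sequence $(V^N)$ alone and is independent of the particular choice of connecting maps; both $V^\infty$ and $\tilde V^\infty$ therefore inherit the \emph{same} stable value $\dim V^\infty_{(k)} = \lim_{N\to\infty}\dim V^N_{(k)} = \dim \tilde V^\infty_{(k)}$. Consequently $N_k = \tilde N_k$ and the two factorizing limits are isomorphic as grading-restricted VAs. I expect the only step that requires any delicacy is confirming that the isomorphism provided by the classification proposition is genuinely an isomorphism of grading-restricted VAs (and not merely of M\"obius VAs), but this is automatic since the classification is built weight-by-weight and intertwines the $L(0)$-grading on both sides by construction.
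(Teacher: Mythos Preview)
Your proof is correct and follows precisely the route the paper intends: the proposition is stated in the paper as a summary without explicit proof, and you have correctly assembled it from theorem~\ref{thm:mainthm}, the M\"obius and unitarity discussion of section~5.2, and the classification proposition immediately preceding it, together with the observation (which the paper makes just before stating the proposition) that non-degeneracy of $B$ plus factorization forces the limit to be determined by the graded dimensions alone, which depend only on $(V^N)$ and not on the connecting maps.
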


\appendix

\bibliographystyle{alpha}
 \bibliography{./refmain}

\end{document}